\newtheorem{remark}{Remark}
 \newtheorem{lemma}[remark]{Lemma}
 \newtheorem{theorem}[remark]{Theorem}
 \newtheorem{corollary}[remark]{Corollary}
\title{On the General Randi\'c index of polymeric networks \\ modelled by generalized Sierpi\'{n}ski graphs}
\author{Alejandro Estrada-Moreno and Juan A. Rodr\'{\i}guez-Vel\'{a}zquez 
\\
{\small Departament d'Enginyeria Inform\`atica i Matem\`atiques,}\\
{\small Universitat Rovira i Virgili,}  {\small Av. Pa\"{\i}sos
Catalans 26, 43007 Tarragona, Spain.} \\{\small
alejandro.estrada\@@urv.cat}, juanalberto.rodriguez\@@urv.cat 
}
\begin{document}
\maketitle

\begin{abstract}
The General Randi\'{c} index $R_\alpha$ of a simple graph
$G$ is defined as
\[
R_\alpha(G)=\sum_{v_{i}\sim v_{j}} \left(d(v_i)d(v_j)\right)^\alpha,
\]
where $d(v_i)$ denotes the degree of the vertex $v_i$.   Rodr\'{\i}guez-Vel\'{a}zquez and Tom\'as-Andreu [MATCH Commun. Math. Comput. Chem. 74 (1) (2015) 145--160] obtained closed formulae for the Randi\'{c} index $R_{-1/2}$ of  Sierpi\'nski-type polymeric networks, where the base graph is a complete graph, a triangle-free regular graph or a bipartite semiregular graph. 
In the present article  we obtain closed  formulae for the general Randi\'{c} index $R_{\alpha}$ of Sierpi\'nski-type polymeric networks, where the base graph is arbitrary.
\end{abstract}

\section{Introduction}

Around the middle of the last century theoretical chemists proposed the use of topological indices to obtain information on the dependence of various properties of organic substances on molecular structure.  In this sense, a large number of various topological indices was proposed and considered in the chemical literature \cite{Todeschini2008}. We highlight the article
\cite{camarda}  where Camarda and Maranas  addressed the design of polymers with optimal levels of macroscopic properties through
the use of topological indices. Specifically, in the above mentioned article two zeroth-order and two first-order connectivity indices
were  employed for the first time as descriptors in structure-property correlations in an
optimization study. Based on these descriptors, a set of new correlations for heat capacity,
cohesive energy, glass transition temperature, refractive index, and dielectric constant were proposed.

The molecular structure-descriptor, introduced in 1975 by Milan Randi\'c in  \cite{Randic1975}, is defined as 
$$R(G)=\sum_{v_iv_j\in E}\frac{1}{\sqrt{d(v_i)d(v_j)}},$$ where $d(v_i)$ represents the degree of the vertex $v_i$ in $G=(V,E)$. Nowadays, $R(G)$
is referred to as the \textit{Randi\'c index} of $G$.
This graph topological index, sometimes referred to as \emph{connectivity index}, has been successfully related to a variety of physical, chemical, and pharmacological properties of organic molecules and became one of the most popular molecular-structure descriptors \cite{Randic2008}. After the publication of the first paper \cite{Randic1975}, mathematical properties and generalizations of $R(G)$ were extensively studied, for instance, see \cite{gao-lu,Gutman2008,hu-li,Li2005,Li2006,Li2008,liu-gutman, J.Liu,Rodriguez2005a,Rodriguez2005b,Rodriguez-Velazquez2015,Yero2010b} and the references cited therein. The Randi\'c index was generalized by Gutman and Lepovi\'c in \cite{Gutman2001} as $$R_\alpha(G)=\sum_{v_iv_j\in E}\left(d(v_i)d(v_j)\right)^\alpha,\; \alpha\ne 0.$$ Obviously, the standard Randi\'c index is obtained when $\alpha=-1/2$. In the chemical literature the quantity
$${\emph{R}}_1(G)=\sum_{v_iv_j\in E}d(v_i)d(v_j).$$
is called  {\em the second Zagreb index} \cite{Das2004}. 
 
Some topological indices have been studied also for the case of polymeric networks. For instance,  we cite the article \cite{Wang}, where the authors gave the explicitly formula of the $k$-connectivity index of an infinite class of dendrimer  nanostars. 

Over the past three decades, polymer networks has emerged as a coherent subject area \cite{Blumen,JU1,JU,Rodriguez-Velazquez2015,LibroPolymers}.
While the basic works on polymer modelling
started from linear polymeric systems, in recent years the attention has focused more
and more on complex underlying geometries including fractal-type networks. 
 It is well-known that, in comparison with those linear polymers, the properties of polymer networks depend to a much larger extent on methods and condition of preparation, \textit{i.e.}, properties depend not only on the chemical structure of the individual polymer chains, but on how those chains are joined together to form a network \cite{LibroPolymers}.
In this article we consider a
model of polymer networks based on generalized Sierpi\'{n}ski graphs, which was previously studied in \cite{Rodriguez-Velazquez2015}.

To begin with, we need some notation and terminology. Let $G=(V,E)$ be a non-empty graph of order $n$ and vertex set $V=\{1,2,\ldots,n\}$. We denote by $\{1,2,\ldots,n\}^t$ the set of words of length $t$ on alphabet $\{1,2,\ldots,n\}$. The letters of a word $u$ of length $t$ are denoted by $u_1u_2\cdots u_t$. The concatenation of two words $u$ and $v$  is denoted by $uv$. Klav\v{z}ar and Milutinovi\'c introduced in \cite{Klavzar1997} the graph  $S(K_n, t)$ whose vertex set is $\{1,2,\ldots,n\}^t$, where  
$\{u,v\}$ is an edge if and only if there exists $i\in \{1,\ldots,t\}$ such that:
\begin{center}
(i)  $u_j=v_j$, if  $j<i$; (ii)  $u_i\ne v_i$; (iii) $u_j=v_i$ and $v_j=u_i$ if $j>i$.
\end{center}

The graph $S(K_3,t)$ is isomorphic to 
the graph of the  Tower of Hanoi with $t$ disks \cite{Klavzar1997}. Later, those graphs have been
called Sierpi\'{n}ski graphs in \cite{Klavzar2002} and they were studied by now from numerous points of view. The reader is
invited to read, for instance, the following recent papers \cite{Gravier...Parreau,Hinz-Parisse,Hirtz-Holz,Klavzar2002,Klavsar-Peterin,Klavsar-Zeljic,Parisse2009,Klavzar2016(survey)} and references
therein.
 
This construction was generalized in \cite{GeneralizedSierpinski} for any graph $G=(V,E)$, by defining the \emph{generalized Sierpi\'{n}ski graph},  $S(G,t)$,  as the graph with vertex set $\{1,2,\ldots,n\}^t$ and edge set defined as follows. $\{u,v\}$ is an edge if and only if there exists $i\in \{1,\ldots,t\}$ such that:
 \begin{center}
 (i) $u_j=v_j$, if  $j<i$; (ii) $u_i\ne v_i$ and $\{u_i,v_i\}\in E$; (iii) $u_j=v_i$ and $v_j=u_i$ if $j>i$.
 \end{center}

Notice that if $\{u,v\}$ is an edge of $S(G,t)$, there is an edge $\{x,y\}$ of $G$ and a word $w$ such that $u=wxyy\cdots y$ and $v=wyxx\cdots x$. In general, $S(G,t)$ can be constructed recursively from $G$ with the following process: $S(G,1)=G$ and, for $t\ge 2$, we copy $n$ times $S(G, t-1)$ and add the letter $x$ at the beginning of each label of the vertices belonging to  the copy of $S(G,t-1)$ corresponding to $x$. Then for every edge $\{x,y\}$ of $G$, add an edge between vertex $xyy\cdots y$ and vertex $yxx\cdots x$. See, for instance, Figure \ref{FigS(G,2)} that shows a graph $G$ and the generalized Sierpi\'{n}ski graph $S(G,2)$. Besides Figure \ref{FigS(G,3).} shows the generalized Sierpi\'{n}ski graph $S(G,3)$.  Vertices of the form $xx\cdots x$ are called \textit{extreme vertices}. Notice that for any graph $G$ of order $n$ and any integer $t\ge 2$,  $S(G,t)$  has $n$ extreme vertices and, if $x$ has degree $d(x)$ in $G$, then the extreme vertex $xx\cdots x$ of $S(G,t)$   also has degree  $d(x)$. Moreover,   the degrees of two vertices $yxx\cdots x$ and  $xyy\cdots y$, which connect two copies of $S(G,t-1)$, are  equal to  $d(x)+1$ and $d(y)+1$, respectively.

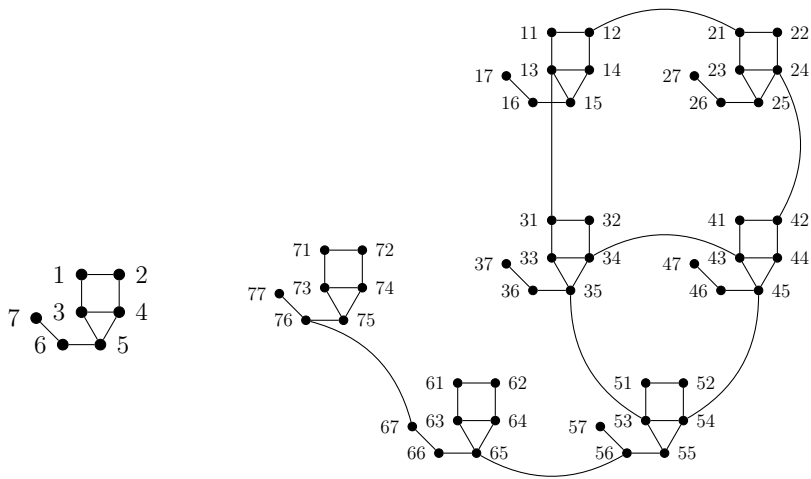
\begin{figure}[ht]
\centering
\begin{tikzpicture}[transform shape, inner sep = .5mm]
\def\side{.5};
\pgfmathsetmacro\radius{\side/sqrt(3)};
\foreach \ind in {3,4,5}
{
\pgfmathparse{150-120*(\ind-3)};
\node [draw=black, shape=circle, fill=black] (\ind) at (\pgfmathresult:\radius cm) {};
}
\node [draw=black, shape=circle, fill=black] (1) at ([yshift=\side cm]3) {};
\node [draw=black, shape=circle, fill=black] (2) at ([yshift=\side cm]4) {};
\node [draw=black, shape=circle, fill=black] (6) at ([xshift=-\side cm]5) {};
\node [draw=black, shape=circle, fill=black] (7) at ([shift=({135:\side cm})]6) {};
\foreach \ind in {2,4,5}
{
\node [scale=.8] at ([xshift=.3 cm]\ind) {$\ind$};
}
\foreach \ind in {1,3,6,7}
{
\node [scale=.8] at ([xshift=-.3 cm]\ind) {$\ind$};
}
\foreach \u/\v in {1/2,1/3,2/4,3/4,3/5,4/5,5/6,6/7}
{
\draw (\u) -- (\v);
}


\def\widenodetwo{.4};
\pgfmathparse{15*\side};
\node (center) at (\pgfmathresult cm,0) {};
\pgfmathsetmacro\sideTwo{5*\side};
\pgfmathsetmacro\radiuscenter{\sideTwo/sqrt(3)};
\foreach \d in {3,4,5}
{
\pgfmathparse{150-120*(\d-3)};
\node (center\d) at ([shift=({\pgfmathresult:\radiuscenter cm})]center) {};
\foreach \ind in {3,4,5}
{
\pgfmathparse{150-120*(\ind-3)};
\node [draw=black, shape=circle, fill=black,inner sep = \widenodetwo mm] (\d\ind) at ([shift=({\pgfmathresult:\radius cm})]center\d) {};
}
\node [draw=black, shape=circle, fill=black,inner sep = \widenodetwo mm] (\d1) at ([yshift=\side cm]\d3) {};
\node [draw=black, shape=circle, fill=black,inner sep = \widenodetwo mm] (\d2) at ([yshift=\side cm]\d4) {};
\node [draw=black, shape=circle, fill=black,inner sep = \widenodetwo mm] (\d6) at ([xshift=-\side cm]\d5) {};
\node [draw=black, shape=circle, fill=black,inner sep = \widenodetwo mm] (\d7) at ([shift=({135:\side cm})]\d6) {};
\foreach \ind in {2,4,5}
{
\node [scale=.6] at ([xshift=.3 cm]\d\ind) {$\d\ind$};
}
\foreach \ind in {1,3,6,7}
{
\node [scale=.6] at ([xshift=-.3 cm]\d\ind) {$\d\ind$};
}
\foreach \u/\v in {1/2,1/3,2/4,3/4,3/5,4/5,5/6,6/7}
{
\draw (\d\u) -- (\d\v);
}
}
\foreach \d in {1,2,6,7}
{
\ifthenelse{\d=1}
{
\node (center1) at ([shift=({0,\sideTwo})]center3) {};
}
{
\ifthenelse{\d=2}
{
\node (center2) at ([shift=({0,\sideTwo})]center4) {};
}
{
\ifthenelse{\d=6}
{
\node (center6) at ([shift=({-\sideTwo,0})]center5) {};
}
{
\node (center7) at ([shift=({135:\sideTwo cm})]center6) {};
};
};
};
\foreach \ind in {3,4,5}
{
\pgfmathparse{150-120*(\ind-3)};
\node [draw=black, shape=circle, fill=black,inner sep = \widenodetwo mm] (\d\ind) at ([shift=({\pgfmathresult:\radius})]center\d) {};
}

\node [draw=black, shape=circle, fill=black,inner sep = \widenodetwo mm] (\d1) at ([yshift=\side cm]\d3) {};
\node [draw=black, shape=circle, fill=black,inner sep = \widenodetwo mm] (\d2) at ([yshift=\side cm]\d4) {};
\node [draw=black, shape=circle, fill=black,inner sep = \widenodetwo mm] (\d6) at ([xshift=-\side cm]\d5) {};
\node [draw=black, shape=circle, fill=black,inner sep = \widenodetwo mm] (\d7) at ([shift=({135:\side cm})]\d6) {};
\foreach \ind in {2,4,5}
{
\node [scale=.6] at ([xshift=.3 cm]\d\ind) {$\d\ind$};
}
\foreach \ind in {1,3,6,7}
{
\node [scale=.6] at ([xshift=-.3 cm]\d\ind) {$\d\ind$};
}
\foreach \u/\v in {1/2,1/3,2/4,3/4,3/5,4/5,5/6,6/7}
{
\draw (\d\u) -- (\d\v);
}
}
\foreach \u/\v in {1/2,1/3,2/4,3/4,3/5,4/5,5/6,6/7}
{
\ifthenelse{\u=1 \AND \v>2}
{
\draw (\u\v) -- (\v\u);
}
{
\ifthenelse{\u=3 \AND \v=5 \OR \u=6}
{
\draw (\u\v) to[bend right] (\v\u);
}
{
\draw (\u\v) to[bend left] (\v\u);
};
};
}
\end{tikzpicture}
\caption{A graph $G$ and the Sierpi\'{n}ski graph $S(G,2)$.}
\label{FigS(G,2)}
\end{figure}


\begin{figure}[ht]
\begin{tikzpicture}[transform shape, inner sep = .3mm]


\def\side{.3};
\pgfmathsetmacro\radius{\side/sqrt(3)};
\pgfmathsetmacro\sideTwo{5*\side};
\pgfmathsetmacro\radiuscenter{\sideTwo/sqrt(3)};
\pgfmathsetmacro\sideThree{16*\side};
\pgfmathsetmacro\radiuscenterThree{\sideThree/sqrt(3)};
\node (center) at (0,0) {};
\foreach \c in {3,4,5}
{
\pgfmathparse{150-120*(\c-3)};
\node (center\c) at ([shift=({\pgfmathresult:\radiuscenterThree cm})]center) {};


\foreach \d in {3,4,5}
{
\pgfmathparse{150-120*(\d-3)};
\node (center\c\d) at ([shift=({\pgfmathresult:\radiuscenter cm})]center\c) {};
\foreach \ind in {3,4,5}
{
\pgfmathparse{150-120*(\ind-3)};
\node [draw=black, shape=circle, fill=black] (\c\d\ind) at ([shift=({\pgfmathresult:\radius cm})]center\c\d) {};
}
\node [draw=black, shape=circle, fill=black] (\c\d1) at ([yshift=\side cm]\c\d3) {};
\node [draw=black, shape=circle, fill=black] (\c\d2) at ([yshift=\side cm]\c\d4) {};
\node [draw=black, shape=circle, fill=black] (\c\d6) at ([xshift=-\side cm]\c\d5) {};
\node [draw=black, shape=circle, fill=black] (\c\d7) at ([shift=({135:\side cm})]\c\d6) {};
\foreach \ind in {2,4,5}
{
\node [scale=.4] at ([xshift=.18 cm]\c\d\ind) {$\c\d\ind$};
}
\foreach \ind in {1,3,6,7}
{
\node [scale=.4] at ([xshift=-.18 cm]\c\d\ind) {$\c\d\ind$};
}
\foreach \u/\v in {1/2,1/3,2/4,3/4,3/5,4/5,5/6,6/7}
{
\draw (\c\d\u) -- (\c\d\v);
}
}
\foreach \d in {1,2,6,7}
{
\ifthenelse{\d=1}
{
\node (center\c1) at ([shift=({0,\sideTwo})]center\c3) {};
}
{
\ifthenelse{\d=2}
{
\node (center\c2) at ([shift=({0,\sideTwo})]center\c4) {};
}
{
\ifthenelse{\d=6}
{
\node (center\c6) at ([shift=({-\sideTwo,0})]center\c5) {};
}
{
\node (center\c7) at ([shift=({135:\sideTwo cm})]center\c6) {};
};
};
};
\foreach \ind in {3,4,5}
{
\pgfmathparse{150-120*(\ind-3)};
\node [draw=black, shape=circle, fill=black] (\c\d\ind) at ([shift=({\pgfmathresult:\radius})]center\c\d) {};
}

\node [draw=black, shape=circle, fill=black] (\c\d1) at ([yshift=\side cm]\c\d3) {};
\node [draw=black, shape=circle, fill=black] (\c\d2) at ([yshift=\side cm]\c\d4) {};
\node [draw=black, shape=circle, fill=black] (\c\d6) at ([xshift=-\side cm]\c\d5) {};
\node [draw=black, shape=circle, fill=black] (\c\d7) at ([shift=({135:\side cm})]\c\d6) {};
\foreach \ind in {2,4,5}
{
\node [scale=.4] at ([xshift=.18 cm]\c\d\ind) {$\c\d\ind$};
}
\foreach \ind in {1,3,6,7}
{
\node [scale=.4] at ([xshift=-.18 cm]\c\d\ind) {$\c\d\ind$};
}
\foreach \u/\v in {1/2,1/3,2/4,3/4,3/5,4/5,5/6,6/7}
{
\draw (\c\d\u) -- (\c\d\v);
}
}
\foreach \u/\v in {1/2,1/3,2/4,3/4,3/5,4/5,5/6,6/7}
{
\ifthenelse{\u=1 \AND \v>2}
{
\draw (\c\u\v) -- (\c\v\u);
}
{
\ifthenelse{\u=3 \AND \v=5 \OR \u=6}
{
\draw (\c\u\v) to[bend right] (\c\v\u);
}
{
\draw (\c\u\v) to[bend left] (\c\v\u);
};
};
}
}
\foreach \c in {1,2,6,7}
{
\ifthenelse{\c=1}
{
\node (center1) at ([shift=({0,\sideThree})]center3) {};
}
{
\ifthenelse{\c=2}
{
\node (center2) at ([shift=({0,\sideThree})]center4) {};
}
{
\ifthenelse{\c=6}
{
\node (center6) at ([shift=({-\sideThree,0})]center5) {};
}
{
\node (center7) at ([shift=({135:\sideThree cm})]center6) {};
};
};
};


\foreach \d in {3,4,5}
{
\pgfmathparse{150-120*(\d-3)};
\node (center\c\d) at ([shift=({\pgfmathresult:\radiuscenter cm})]center\c) {};
\foreach \ind in {3,4,5}
{
\pgfmathparse{150-120*(\ind-3)};
\node [draw=black, shape=circle, fill=black] (\c\d\ind) at ([shift=({\pgfmathresult:\radius cm})]center\c\d) {};
}
\node [draw=black, shape=circle, fill=black] (\c\d1) at ([yshift=\side cm]\c\d3) {};
\node [draw=black, shape=circle, fill=black] (\c\d2) at ([yshift=\side cm]\c\d4) {};
\node [draw=black, shape=circle, fill=black] (\c\d6) at ([xshift=-\side cm]\c\d5) {};
\node [draw=black, shape=circle, fill=black] (\c\d7) at ([shift=({135:\side cm})]\c\d6) {};
\foreach \ind in {2,4,5}
{
\node [scale=.4] at ([xshift=.18 cm]\c\d\ind) {$\c\d\ind$};
}
\foreach \ind in {1,3,6,7}
{
\node [scale=.4] at ([xshift=-.18 cm]\c\d\ind) {$\c\d\ind$};
}
\foreach \u/\v in {1/2,1/3,2/4,3/4,3/5,4/5,5/6,6/7}
{
\draw (\c\d\u) -- (\c\d\v);
}
}
\foreach \d in {1,2,6,7}
{
\ifthenelse{\d=1}
{
\node (center\c1) at ([shift=({0,\sideTwo})]center\c3) {};
}
{
\ifthenelse{\d=2}
{
\node (center\c2) at ([shift=({0,\sideTwo})]center\c4) {};
}
{
\ifthenelse{\d=6}
{
\node (center\c6) at ([shift=({-\sideTwo,0})]center\c5) {};
}
{
\node (center\c7) at ([shift=({135:\sideTwo cm})]center\c6) {};
};
};
};
\foreach \ind in {3,4,5}
{
\pgfmathparse{150-120*(\ind-3)};
\node [draw=black, shape=circle, fill=black] (\c\d\ind) at ([shift=({\pgfmathresult:\radius})]center\c\d) {};
}

\node [draw=black, shape=circle, fill=black] (\c\d1) at ([yshift=\side cm]\c\d3) {};
\node [draw=black, shape=circle, fill=black] (\c\d2) at ([yshift=\side cm]\c\d4) {};
\node [draw=black, shape=circle, fill=black] (\c\d6) at ([xshift=-\side cm]\c\d5) {};
\node [draw=black, shape=circle, fill=black] (\c\d7) at ([shift=({135:\side cm})]\c\d6) {};
\foreach \ind in {2,4,5}
{
\node [scale=.4] at ([xshift=.18 cm]\c\d\ind) {$\c\d\ind$};
}
\foreach \ind in {1,3,6,7}
{
\node [scale=.4] at ([xshift=-.18 cm]\c\d\ind) {$\c\d\ind$};
}
\foreach \u/\v in {1/2,1/3,2/4,3/4,3/5,4/5,5/6,6/7}
{
\draw (\c\d\u) -- (\c\d\v);
}
}
\foreach \u/\v in {1/2,1/3,2/4,3/4,3/5,4/5,5/6,6/7}
{
\ifthenelse{\u=1 \AND \v>2}
{
\draw (\c\u\v) -- (\c\v\u);
}
{
\ifthenelse{\u=3 \AND \v=5 \OR \u=6}
{
\draw (\c\u\v) to[bend right] (\c\v\u);
}
{
\draw (\c\u\v) to[bend left] (\c\v\u);
};
};
}
}
\foreach \u/\v in {1/2,1/3,2/4,3/4,3/5,4/5,5/6,6/7}
{
\ifthenelse{\u=1 \AND \v>2}
{
\draw (\u\v\v) -- (\v\u\u);
}
{
\ifthenelse{\u=3 \AND \v=5 \OR \u=6}
{
\draw (\u\v\v) to[bend right] (\v\u\u);
}
{
\draw (\u\v\v) to[bend left] (\v\u\u);
};
};
}
\end{tikzpicture}
\caption{The Sierpi\'{n}ski graph $S(G,3)$ for the graph $G$ of Figure \ref{FigS(G,2)}. }
\label{FigS(G,3).}
\end{figure}
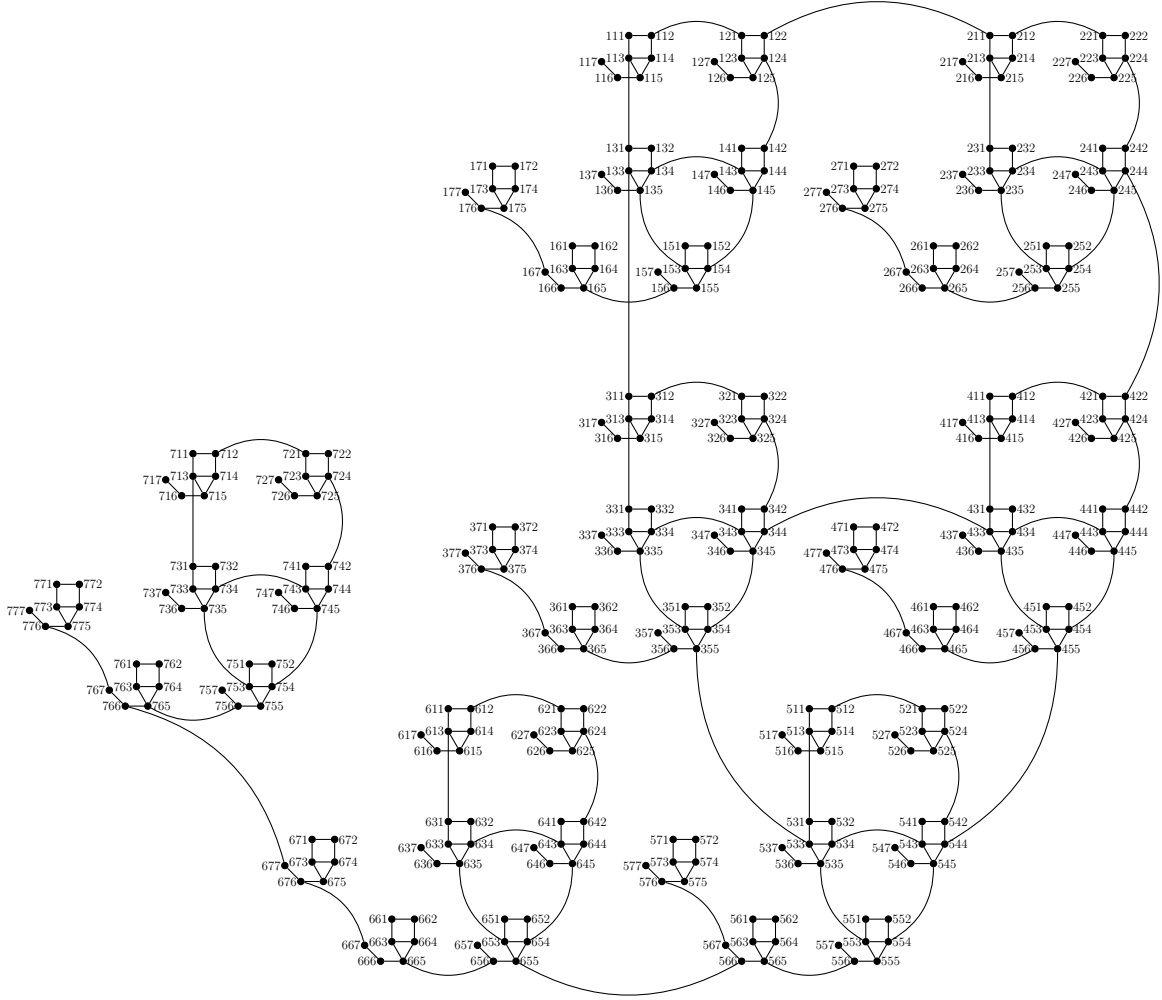

We denote by $P_r$ the path graph of order $r$.
Notice that for $G=K_2$  we obtain $S(K_2,2)=P_4$ and, in general, $S(K_2,t)=P_{2^t}$, which is the simplest possible polymer model presented by the ideal chain.
Also, the graphs  $S(K_n,t)$ were used in  \cite{Blumen,JU1,JU} to
analyse the scaling behaviour of experimentally accessible dynamical relaxation forms for polymers  modelled through finite Sierpi\'{n}ski-type graphs, which we denote here by $P(K_n,t)$. Using the approach developed in \cite{JU1,JU} to construct $P(K_n,t)$,  now we define the 
polymeric Sierpi\'{n}ski
graphs
 $P(G,t)=(V^*,E^*)$ introduced in \cite{Rodriguez-Velazquez2015}, where $G$ is a connected graph of order $n$ and $t$ is a positive integer. For $i\in \{1,\ldots,t\}$ we define the sets $A_i=\{a_{i_1}, \ldots, a_{i_{n^{i-1}}}\}$ and we denote $S(G,i)=(V_i,E_i)$ and $V_i=\{v_{i_1}, \ldots, v_{i_{n^{i}}}\}$. Then, the vertex set of $P(G,t)$ is $$V^*=\bigcup_{i=1}^t(A_i\cup V_i)$$ and the edge set of $P(G,t)$ is $$E^*=\left(\bigcup_{i=1}^n(E_i\cup B_i)\right)\cup \left(\bigcup_{i=1}^{t-1}C_i\right),$$ where $C_i=\{\{v_{i_j},a_{{(i+1)}_j}\}: j=1,\ldots,n^i\}$, $B_i=\bigcup_{j=1}^{n^{i-1}}W_j$, and $W_j$ is formed by the edges obtained by connecting $a_{i_j}$ to every vertex belonging to the $j$-th copy of $G$ in $S(G,i)$. In other words, we construct $P(G,t)$ as follows:  The iterative construction starts from one vertex, $a_{1_1}$, and one copy of $G=S(G,1)$. So, we obtain $P(G,1)$ by connecting $a_{1_1}$ to every vertex of $S(G,1)$. To obtain $P(G,2)$ we take $P(G,1)$, $A_2$ and $S(G,2)$. Then we connect each element $a_{2_j}\in A_2$ to $v_{1_j}\in V_1$ and  we also connect $a_{2_j}$ to every vertex in the $j$-th copy of $G$ in $S(G,2)$. Analogously, for the construction of  $P(G,t)$ we take $P(G,t-1)$, $A_t$ and $S(G,t)$. Then, we connect each element $a_{t_j}\in A_t$ to $v_{{t-1}_j}\in V_{t-1}$ and  we also connect $a_{t_j}$ to every vertex in the $j$-th copy of $G$ in $S(G,t)$. Notice that  $P(K_3,2)=S(K_{4},2)$, $S(K_3,2)=P(K_{2},2),$ while for $t\geq3, P(K_{n},t)\neq S(K_{n+1},t)$. Figure \ref{FigP(G,2)} shows a sketch of a polymeric Sierpi\'{n}ski graph $P(G,2)$.

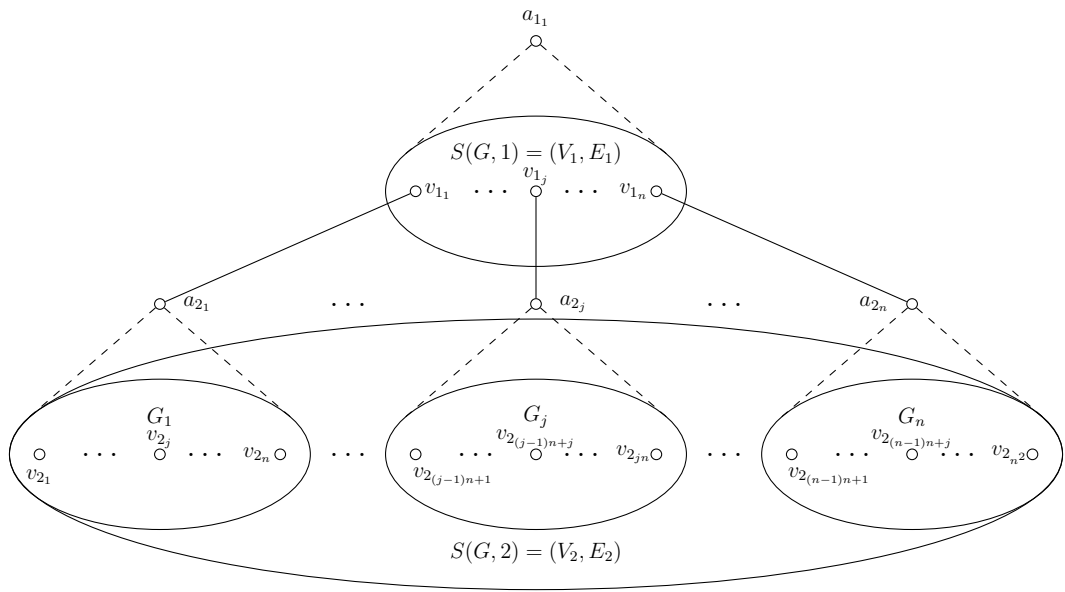
\begin{figure}[!ht]
\centering
\begin{tikzpicture}[transform shape, inner sep = .5mm]
\node [draw=black, shape=circle, fill=white] (a11) at (0,0) {};
\node [scale=.7] at ([yshift=.3 cm]a11) {$a_{1_1}$};
\draw (0,-2) ellipse (2cm and 1cm);
\pgfmathparse{sqrt(3)};
\draw[dashed, black] (a11) -- (-\pgfmathresult,-1.5);
\draw[dashed, black] (a11) -- (\pgfmathresult,-1.5);
\node [draw=black, shape=circle, fill=white] (v11) at (-1.6,-2) {};
\node [scale=.7] at ([xshift=.3 cm]v11) {$v_{1_1}$};
\node [draw=black, shape=circle, fill=white] (a21) at (-5,-3.5) {};
\node [scale=.7] at ([xshift=.5 cm]a21) {$a_{2_1}$};
\draw[black] (v11) -- (a21);
\node [draw=black, shape=circle, fill=white] (v1n) at (1.6,-2) {};
\node [scale=.7] at ([xshift=-.3 cm]v1n) {$v_{1_n}$};
\node [draw=black, shape=circle, fill=white] (a2n) at (5,-3.5) {};
\node [scale=.7] at ([xshift=-.5 cm]a2n) {$a_{2_n}$};
\draw[black] (v1n) -- (a2n);
\node at (-.6,-2) {$\ldots$};
\node at (.6,-2) {$\ldots$};
\node [draw=black, shape=circle, fill=white] (v1j) at (0,-2) {};
\node [scale=.7] at ([yshift=.2 cm]v1j) {$v_{1_j}$};
\node [scale=.7] at (0,-1.5) {$S(G,1)=(V_1,E_1)$};
\node [draw=black, shape=circle, fill=white] (a2j) at (0,-3.5) {};
\node [scale=.7] at ([xshift=.5 cm]a2j) {$a_{2_j}$};
\draw[black] (v1j) -- (a2j);
\node at (-2.5,-3.5) {$\ldots$};
\node at (2.5,-3.5) {$\ldots$};

\draw (0,-5.5) ellipse (2cm and 1cm);
\draw[dashed, black] (a2j) -- (-\pgfmathresult,-5);
\draw[dashed, black] (a2j) -- (\pgfmathresult,-5);
\node [draw=black, shape=circle, fill=white] (v2j1) at (-1.6,-5.5) {};
\node [scale=.7] at ([shift=({.5 cm,-.3})]v2j1) {$v_{2_{(j-1)n+1}}$};
\node [draw=black, shape=circle, fill=white] (v2jj) at (0,-5.5) {};
\node [scale=.7] at ([yshift=.2 cm]v2jj) {$v_{2_{(j-1)n+j}}$};
\node [draw=black, shape=circle, fill=white] (v2jn) at (1.6,-5.5) {};
\node [scale=.7] at ([xshift=-.3 cm]v2jn) {$v_{2_{jn}}$};
\node at (-.8,-5.5) {$\ldots$};
\node at (.6,-5.5) {$\ldots$};
\node [scale=.7] at (0,-5) {$G_j$};

\draw (5,-5.5) ellipse (2cm and 1cm);
\pgfmathparse{sqrt(3)+5};
\draw[dashed, black] (a2n) -- (\pgfmathresult,-5);
\pgfmathparse{-sqrt(3)+5};
\draw[dashed, black] (a2n) -- (\pgfmathresult,-5);
\node [draw=black, shape=circle, fill=white] (v2n1) at (3.4,-5.5) {};
\node [scale=.7] at ([shift=({.5 cm,-.3})]v2n1) {$v_{2_{(n-1)n+1}}$};
\node [draw=black, shape=circle, fill=white] (v2nj) at (5,-5.5) {};
\node [scale=.7] at ([yshift=.2 cm]v2nj) {$v_{2_{(n-1)n+j}}$};
\node [draw=black, shape=circle, fill=white] (v2nn) at (6.6,-5.5) {};
\node [scale=.7] at ([xshift=-.3 cm]v2nn) {$v_{2_{n^2}}$};
\node at (4.2,-5.5) {$\ldots$};
\node at (5.6,-5.5) {$\ldots$};
\node [scale=.7] at (5,-5) {$G_n$};

\draw (-5,-5.5) ellipse (2cm and 1cm);
\pgfmathparse{sqrt(3)-5};
\draw[dashed, black] (a21) -- (\pgfmathresult,-5);
\pgfmathparse{-sqrt(3)-5};
\draw[dashed, black] (a21) -- (\pgfmathresult,-5);
\node [draw=black, shape=circle, fill=white] (v21) at (-6.6,-5.5) {};
\node [scale=.7] at ([yshift=-.3 cm]v21) {$v_{2_1}$};
\node [draw=black, shape=circle, fill=white] (v2j) at (-5,-5.5) {};
\node [scale=.7] at ([yshift=.2 cm]v2j) {$v_{2_j}$};
\node [draw=black, shape=circle, fill=white] (v2n) at (-3.4,-5.5) {};
\node [scale=.7] at ([xshift=-.3 cm]v2n) {$v_{2_n}$};
\node at (-5.8,-5.5) {$\ldots$};
\node at (-4.4,-5.5) {$\ldots$};
\node [scale=.7] at (-5,-5) {$G_1$};

\node at (-2.5,-5.5) {$\ldots$};
\node at (2.5,-5.5) {$\ldots$};
\node [scale=.7] at (0,-6.8) {$S(G,2)=(V_2,E_2)$};

\draw (0,-5.5) ellipse (7cm and 1.8cm);

\end{tikzpicture}
\caption{Sketch of a polymeric Sierpi\'{n}ski graph $P(G,2)$, where a small ellipse represents a copy of $G$ and the dashed lines connecting a vertex $a_{i_k}$ and a small ellipse mean that each vertex of the $k$-th  copy of $G$ in $S(G,2)$ is connected to $a_{i_k}$.}\label{FigP(G,2)}
\end{figure}

The authors of  \cite{GeneralizedSierpinski} announced some results about generalized Sierpi\'{n}ski graphs concerning their automorphism groups  and perfect codes. In our opinion, these results definitely deserve to be published. Later, the total chromatic number of generalized Sierpi\'{n}ski graphs was  studied  in \cite{Geetha2015} and the strong metric dimension has recently been  studied in \cite{Rodriguez-Velazquez2016}.
The authors of \cite{Rodriguez-Velazquez2015a} obtained closed formulae for the chromatic, vertex cover, clique and domination numbers of generalized Sierpi\'{n}ski graphs $S(G,t)$ in terms of parameters of the base graph $G$. More recently,   a general upper bound on the Roman domination number of $S(G,t)$ was obtained in \cite{Ramezani2016}. In particular, it was studied the case in which the base graph $G$ is a path, a cycle, a complete graph or a graph having exactly one universal vertex. To the best of our knowledge, \cite{Rodriguez-Velazquez2015} is the first
published paper studying the generalized Sierpi\'{n}ski graphs. In that article, the authors obtained closed formulae for the Randi\'{c} index $R_{-1/2}$ of $S(G,t)$ and $P(G,t)$, where the base graph $G$ is a complete graph, a triangle-free regular graph or a bipartite semiregular graph. 
The present article is a continuation of \cite{Rodriguez-Velazquez2015}
where we study the general Randi\'{c} index $R_{\alpha}$ of  Sierpi\'nski-type polymeric networks.
 In particular,  we obtain closed  formulae for the general Randi\'{c} index $R_{\alpha}$ of  $S(G,t)$ and $P(G,t)$, where the base graph $G$, parameter $t$ and exponent  $\alpha$ are arbitrary.

\section{Computing the General Randi\'c index of $S(G,t)$ }

From now on, given a graph $G=(V,E)$ and a specific edge $\{x,y\}\in E$, the number of copies of $\{x,y\}$ in $S(G,t)$, where vertex $x$ has degree $d(x)+l$ and vertex $y$ has degree $d(y)+l'$ will be denoted by $f_{S(G,t)}\left(d(x)+l,d(y)+l'\right)$, where $l,l'\in \{0,1\}$. For instance, for the graph $S(G,3)$ shown in Figure \ref{FigS(G,3).} we have $f_{S(G,3)}\left(d(1),d(2)\right)=21$ and $f_{S(G,3)}\left(d(1)+1,d(2)\right)=f_{S(G,3)}\left(d(1),d(2)+1\right)=f_{S(G,3)}\left(d(1)+1,d(2)+1\right)=12$. It can be noted that copies of a specific edge $\{x,y\}\in E$ in $S(G,t)$ are edges $\{wxy^r, wyx^r\}$, where $r\in\{0,1,\ldots,t-1\}$ and $w\in V^{t-1-r}$.

The set of neighbours that $x\in V$ has in $G$ will be denoted by $N(x)$, \textit{i.e.}, $$N(x)=\{z\in V:\; \{x,z\}\in E\}.$$

Given two vertices $u,v\in V $, the number of triangles of $G$ containing $u$ and $v$ will be denoted by $\tau(u,v)$ and the number of triangles of $G$ will be denoted by $\tau(G)$.\footnote{A triangle in a graph is a set of three vertices whose induced subgraph is isomorphic to $K_3$.} Note that $\displaystyle\sum_{\{u,v\}\in E(G)}\tau(u,v)=3\tau(G)$.
The complexity of counting the number of triangles of a graph $G$ is polynomial with respect to $|V|$. The trivial approach of counting the number of triangles is to check for every triple $x,y,z\in V$ if $x,y,z$ forms a triangle. This procedure gives us the algorithmic complexity of $O(n^3)$. However, this algorithmic complexity can be improved \cite{Chiba1985}. 

Notice that for any pair of adjacent vertices $u,v\in V$ we have $| N(u)\cap N(v)|=\tau(u,v)$, $| N(u)\cup N(v)|=d(u)+d(v)-\tau(u,v)$ and $| N(u)- N(v)|=d(u)-\tau(u,v)$. Given a graph of order $n$, from now on we will use the function $\psi(t)=1+n+n^2+\cdots+n^{t-1}=\dfrac{n^t-1}{n-1}$. 

\begin{lemma}\label{SierpinskiGeneral}
For any integer $t\ge 2$ and any edge  $\{x,y\}$ of   a graph $G$ of order $n$,   
\begin{enumerate}[{\rm (i)}]
\item $f_{S(G,t)}\left(d(x),d(y)\right)=n^{t-2}\left(n-d(x)-d(y)+\tau(x,y)\right).$
\item $f_{S(G,t)}\left(d(x),d(y)+1\right)=n^{t-2}\left(d(y)-\tau(x,y)\right)-\psi(t-2)d(x).$
\item $f_{S(G,t)}\left(d(x)+1,d(y)\right)=n^{t-2}\left(d(x)-\tau(x,y)\right)-\psi(t-2)d(y).$
\item $f_{S(G,t)}\left(d(x)+1,d(y)+1\right)=n^{t-2}\left(\tau(x,y)+1\right)+\psi(t-2)\left(d(x)+d(y)+1\right).$
\end{enumerate}
\end{lemma}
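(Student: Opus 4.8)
The plan is to first identify the degree in $S(G,t)$ of an arbitrary vertex $v=v_1\cdots v_t$, and then count the copies of a fixed edge $\{x,y\}\in E$ according to the degrees of their two endpoints. For the degree one returns to the defining incidence condition: a neighbour of $v$ comes with a critical index $i$; the choice $i=t$ produces exactly the $d(v_t)$ neighbours $v_1\cdots v_{t-1}z$ with $z\in N(v_t)$ (the edges inside the ``bottom'' copy of $G$), whereas a choice $i<t$ forces $v_{i+1}=\cdots=v_t$ and $v_i\neq v_{i+1}$, which is possible only when $i=j-1$, where $v_j\cdots v_t$ is the maximal constant suffix of $v$. Hence $v$ has at most one neighbour outside its bottom copy, and it has exactly one when $v$ is not an extreme vertex and $\{v_{j-1},v_t\}\in E$. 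Consequently $\deg_{S(G,t)}(v)=d(v_t)$ for the $n$ extreme vertices, while for every other $v$ one has $\deg_{S(G,t)}(v)=d(v_t)+1$ if $\{v_{j-1},v_t\}\in E$ and $\deg_{S(G,t)}(v)=d(v_t)$ otherwise.

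Next I would classify the copies of $\{x,y\}$ by the same critical index. Those with $i=t$ are the ``base'' edges $\{wx,wy\}$, $w\in V^{t-1}$, of which there are $n^{t-1}$; those with $i<t$ are the ``linking'' edges $\{\,w\,y\,x^{m},\,w\,x\,y^{m}\,\}$ with $1\le m\le t-1$ and $w\in V^{t-m-1}$, of which there are $\sum_{m=1}^{t-1}n^{t-m-1}=\psi(t-1)$. By the degree description above, in each linking edge the endpoint ending in $x$ sits at the end of a nontrivial run $x^{m}$ preceded by $y$, hence has degree $d(x)+1$, and symmetrically the other endpoint has degree $d(y)+1$; so all $\psi(t-1)=\psi(t-2)+n^{t-2}$ linking edges contribute only to item (iv).

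It then remains to sort the $n^{t-1}$ base edges $\{wx,wy\}$ by the last letter $z$ of $w$. When $z\notin\{x,y\}$ (there are $n^{t-2}$ such $w$ for each admissible $z$), the two endpoints have degrees $d(x)+[z\in N(x)]$ and $d(y)+[z\in N(y)]$, and counting the admissible colours $z$ by means of $|N(x)\cap N(y)|=\tau(x,y)$ and $|N(x)\cup N(y)|=d(x)+d(y)-\tau(x,y)$ ---keeping in mind that $x\in N(y)$ and $y\in N(x)$, so these sets miss $x$ and $y$ in the expected way--- produces precisely the $n^{t-2}$-terms of (i)--(iv). When $z=x$, the endpoint $wy$ has degree $d(y)+1$ automatically (its last two letters are $x,y$ and $\{x,y\}\in E$), while $wx$ has degree $d(x)+1$ if and only if $w\neq x^{t-1}$ and the letter $z'$ immediately preceding the trailing run $x^{k}$ of $w$ lies in $N(x)$; summing over $k\in\{1,\dots,t-2\}$, with $n^{t-2-k}$ choices for the part of $w$ before $z'$ and $d(x)$ choices for $z'$, gives the geometric sum $d(x)\psi(t-2)$, and the single word $x^{t-1}$ supplies one extra unit which, together with the complementary count $(n-1-d(x))\psi(t-2)$ and the identity $(n-1)\psi(t-2)=n^{t-2}-1$, yields the $\psi(t-2)$-terms; the subcase $z=y$ is symmetric. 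Assembling the three regimes (together with the linking edges in (iv)) gives the four formulae, and as a final consistency check their sum should equal $n^{t-1}+\psi(t-1)$, the total number of copies of $\{x,y\}$ in $S(G,t)$.

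The step I expect to be most delicate is the last one in the subcases $z=x$ and $z=y$: there the degree of $wx$ (respectively $wy$) is not a function of the last letter of $w$ alone but depends on how far the trailing block of $x$'s (respectively $y$'s) in $w$ extends, so the count must be stratified by the length of that block, and the lone extreme word $x^{t-1}$ (respectively $y^{t-1}$) has to be handled separately; it is precisely the cancellation $1+(n-1-d(x))\psi(t-2)+d(x)\psi(t-2)=n^{t-2}$ that makes the coefficients of $\psi(t-2)$ in (ii)--(iv) come out as stated.
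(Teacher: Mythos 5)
Your argument is correct, but it takes a genuinely different route from the paper's. The paper computes the four counts explicitly only for $t=2$ (where your stratification by the last letter $z$ of the prefix reduces to classifying $z$ relative to $N(x)$ and $N(y)$) and then gets the general case by a recursion in $t$: prepending a letter $z$ to a copy of $S(G,t-1)$ changes a vertex's degree only when that vertex is an extreme vertex $j\cdots j$ and $z\in N(j)$, which gives, e.g., $f_{S(G,t)}\left(d(x),d(y)+1\right)=nf_{S(G,t-1)}\left(d(x),d(y)+1\right)-d(x)$, and the $\psi$-terms appear as geometric sums upon unrolling the recursion. You instead prove a closed-form degree formula for an arbitrary vertex of $S(G,t)$ in terms of its maximal constant suffix, classify the copies of $\{x,y\}$ once and for all into base and linking edges, and obtain the $\psi$-terms directly as geometric sums over the length of the trailing run, with the identities $(n-1)\psi(t-2)=n^{t-2}-1$ and $\psi(t-1)=n^{t-2}+\psi(t-2)$ reassembling the pieces into the stated coefficients. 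The paper's induction is shorter; your direct count is self-contained, makes the provenance of each term visible (the $+1$ in item (iv) splits as one linking edge per level), and comes with the useful global check that the four counts sum to $n^{t-1}+\psi(t-1)$. One phrasing quibble: the claim that the colours $z\notin\{x,y\}$ ``produce precisely the $n^{t-2}$-terms of (i)--(iv)'' is literally true only for (i) --- for (ii) those colours yield $n^{t-2}\left(d(y)-\tau(x,y)-1\right)$, and the missing $n^{t-2}$ is recovered from the subcase $z=x$ together with the linking edges, exactly as your computation $1+(n-1)\psi(t-2)=n^{t-2}$ shows --- so this is a matter of exposition, not of substance.
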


\begin{proof}
There are two different possibilities for the degree of any vertex   $zx$ of $S(G,2)$, namely $d(x)$ and $d(x)+1$, \textit{i.e.},   $zx$ has degree $d(x)+1$ for all $z\in N(x)$ and  $zx$ has degree $d(x)$ for all $z\not\in N(x)$.  Therefore,

\begin{description}
\item $f_{S(G,2)}\left(d(x),d(y)\right)=\left|V-\left(N(x)\cup N(y)\right)\right|=n- d(x)-d(y)+\tau(x,y),$
\item $f_{S(G,2)}\left(d(x),d(y)+1\right)=\left|N(y)-N(x)\right|=d(y)-\tau(x,y)$,
\item $f_{S(G,2)}\left(d(x)+1,d(y)\right)=\left|N(x)-N(y)\right|=d(x)-\tau(x,y)$,
\item $f_{S(G,2)}\left(d(x)+1,d(y)+1\right)=|N(x)\cap N(y)|+1=\tau(x,y)+1$
\end{description}

For any $t\ge 3$,  $w\in V^{t-1}$ and $z\in V$,  the degree of $zw$ in $S(G,t)$ coincides with the degree of $w$ in $S(G,t-1)$, except when $w=jj\cdots j$ is an extreme vertex and $z  \in N(j)$, in which case the degree of $zw=zjj\cdots j$ in $S(G,t)$ is $d(j)+1$  while   the degree of $w=jj\cdots j$ in $S(G,t-1)$ is $d(j)$. 
Hence, we deduce the following:
\begin{flalign*}
f_{S(G,t)}\left(d(x),d(y)\right)&=nf_{S(G,t-1)}\left(d(x),d(y)\right)&\\
&=n^{t-2}\left(n- d(x)-d(y)+\tau(x,y)\right),
\end{flalign*}
\begin{flalign*}
f_{S(G,t)}\left(d(x),d(y)+1\right)&=nf_{S(G,t-1)}\left(d(x),d(y)+1\right)-d(x)&\\
&=n^{t-2}\left(d(y)-\tau(x,y)\right)-\sum_{i=0}^{t-3}n^id(x)\\
&=n^{t-2}\left(d(y)-\tau(x,y)\right)-\psi(t-2)d(x),
\end{flalign*}
\begin{flalign*}
f_{S(G,t)}\left(d(x)+1,d(y)\right)&=nf_{S(G,t-1)}\left(d(x)+1,d(y)\right)-d(y)&\\
&=n^{t-2}\left(d(x)-\tau(x,y)\right)-\sum_{i=0}^{t-3}n^id(y)\\
&=n^{t-2}\left(d(x)-\tau(x,y)\right)-\psi(t-2)d(y),
\end{flalign*}
and finally,
\begin{flalign*}
f_{S(G,t)}\left(d(x)+1,d(y)+1\right)&=nf_{S(G,t-1)}\left(d(x)+1,d(y)+1\right)+d(x)+d(y)+1&\\
&=n^{t-2}\left(\tau(x,y)+1\right)+\sum_{i=0}^{t-3}n^i\left(d(x)+d(y)+1\right)\\
&=n^{t-2}\left(\tau(x,y)+1\right)+\psi(t-2)\left(d(x)+d(y)+1\right).
\end{flalign*}
Therefore, the result follows.
\end{proof}

From the previous result we can deduce that the number of copies of $\{x,y\}\in E$ in $S(G,t)$ is $\displaystyle\sum_{l,l'\in\{0,1\}}f_{S(G,t)}\left(d(x)+l,d(y)+l'\right)=\psi(t)$. This result could also have been obtained by counting of these edges, considering the form of the words of vertices associated with them.

The main result of this section is Theorem \ref{MainTheorem-S(G,t)} which provides a formula for the general Randi\'c index of $S(G,t)$, where the base graph $G$ and $\alpha$ are arbitrary, and $t$ is an integer greater than one. 

\begin{theorem}\label{MainTheorem-S(G,t)}
For any graph $G$ of order $n\ge 2$ and any integer $t\ge 2$,
$$R_\alpha(S(G,t))=\sum_{\{x,y\}\in E}W_{\{x,y\}},$$
where
\begin{flalign*}
W_{\{x,y\}}&=n^{t-2}\left(n- d(x)-d(y)+\tau(x,y)\right)d(x)^\alpha d(y)^\alpha+&\\
&+\left(n^{t-2}\left(d(y)-\tau(x,y)\right)-\psi(t-2)d(x)\right)d(x)^\alpha(d(y)+1)^\alpha+\\
&+\left(n^{t-2}\left(d(x)-\tau(x,y)\right)-\psi(t-2)d(y)\right)(d(x)+1)^\alpha d(y)^\alpha+\\
&+\left(n^{t-2}\left(\tau(x,y)+1\right)+\psi(t-2)\left(d(x)+d(y)+1\right)\right)(d(x)+1)^\alpha(d(y)+1)^\alpha.
\end{flalign*}
\end{theorem}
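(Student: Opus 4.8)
The plan is to evaluate $R_\alpha(S(G,t))=\sum_{\{u,v\}\in E(S(G,t))}(\deg(u)\deg(v))^\alpha$ by partitioning the edge set of $S(G,t)$ according to the edge of $G$ from which each edge ``descends.'' Recall from the construction that every edge $\{u,v\}$ of $S(G,t)$ has the form $u=wxyy\cdots y$, $v=wyxx\cdots x$ for a word $w$ and an edge $\{x,y\}\in E$, and that both $w$ and $\{x,y\}$ are recovered from $\{u,v\}$ by locating the first coordinate in which $u$ and $v$ differ. Hence the assignment $\{u,v\}\mapsto\{x,y\}$ is a well-defined surjection $E(S(G,t))\to E$ whose fibres partition $E(S(G,t))$, so that
\[
R_\alpha(S(G,t))=\sum_{\{x,y\}\in E}\ \sum_{\{u,v\}\ \text{over}\ \{x,y\}}(\deg(u)\deg(v))^\alpha,
\]
and it suffices to show the inner sum equals $W_{\{x,y\}}$.

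Next I would fix $\{x,y\}\in E$ and invoke the degree analysis already recorded in the introduction and in the proof of Lemma \ref{SierpinskiGeneral}: an endpoint of $S(G,t)$ lying over $x$ has degree $d(x)$ or $d(x)+1$, and likewise for $y$. Thus every edge in the fibre over $\{x,y\}$ has one of the four degree pairs $(d(x),d(y))$, $(d(x),d(y)+1)$, $(d(x)+1,d(y))$, $(d(x)+1,d(y)+1)$ — read off the endpoint over $x$ and the endpoint over $y$ respectively — and these are exhaustive and pairwise distinct. By the definition of $f_{S(G,t)}(\cdot,\cdot)$ given just before Lemma \ref{SierpinskiGeneral}, the number of fibre edges of each type is the corresponding value $f_{S(G,t)}(a,b)$, so the inner sum equals $\sum_{(a,b)}f_{S(G,t)}(a,b)\,(ab)^\alpha$ over those four pairs.

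Finally I would substitute the four closed expressions from Lemma \ref{SierpinskiGeneral} for $f_{S(G,t)}(d(x),d(y))$, $f_{S(G,t)}(d(x),d(y)+1)$, $f_{S(G,t)}(d(x)+1,d(y))$ and $f_{S(G,t)}(d(x)+1,d(y)+1)$, multiply them by $d(x)^\alpha d(y)^\alpha$, $d(x)^\alpha(d(y)+1)^\alpha$, $(d(x)+1)^\alpha d(y)^\alpha$ and $(d(x)+1)^\alpha(d(y)+1)^\alpha$ respectively, and add; the sum is exactly the displayed formula for $W_{\{x,y\}}$. Summing over all $\{x,y\}\in E$ then gives the theorem.

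The hard part is essentially behind us once Lemma \ref{SierpinskiGeneral} is granted; the only step needing a little care is the first one — verifying that the ``descends from'' map is well defined and surjective so that its fibres genuinely partition $E(S(G,t))$, and that within a fibre the four degree pairs are exhaustive and pairwise distinct. After that, assembling $W_{\{x,y\}}$ is a routine finite linear combination, and since $t$ is fixed throughout there are no convergence or induction subtleties to address.
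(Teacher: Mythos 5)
Your proposal is correct and follows essentially the same route as the paper: both partition $E(S(G,t))$ into fibres over the edges of $G$, observe that each endpoint's degree is $d(\cdot)$ or $d(\cdot)+1$ so the four counts $f_{S(G,t)}(d(x)+i,d(y)+j)$ exhaust the fibre, and then substitute Lemma \ref{SierpinskiGeneral}. Your explicit verification that the ``descends from'' map is well defined is a small added courtesy the paper leaves implicit.
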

\begin{proof}
Since any copy of a vertex $z\in V$ in $S(G,t)$ has degree $d(z)$ or $d(z)+1$, the general Randi\'c index of  $S(G,t)$ can be expressed as
$$R_\alpha(S(G,t))=\sum_{\{x,y\}\in E}\sum_{i=0}^1\sum_{j=0}^1 f_{S(G,t)}\left(d(x)+i,d(y)+j\right)(d(x)+i)^\alpha (d(y)+j)^\alpha.$$
Hence, by Lemma \ref{SierpinskiGeneral} the result immediately follows.
\end{proof}

It can be noted that closed formulas can be obtained for several topological indices of $S(G,t)$ similarly to the previous proof. For instance, we are referring to the atom-bond connectivity index (ABC), geometric arithmetic index (GA), Harmonic index (H), first Zagreb index ($M_1$) and sum-connectivity index ($\chi$).

The remaining results of this section are directly derived  from Theorem \ref{MainTheorem-S(G,t)}.

\begin{corollary}\label{ThSierpinskiRegular}
For any $\delta$-regular graph $G$ of order $n$ and any integer $t\ge 2,$
\begin{align*}
R_\alpha(S(G,t))&=\left(\frac{n^{t-1}\delta}{2}(n-2\delta)+3n^{t-2}\tau(G)\right)\delta^{2\alpha}\\
&+\left(\left(n^{t-1}+\psi(t-1)\right) \delta^{2}-6n^{t-2}\tau(G)\right)\delta^\alpha(\delta+1)^{\alpha} \\
&+\left(\dfrac{n\delta}{2}\psi(t-1)+n\delta^{2}\psi(t-2)+3n^{t-2}\tau(G)\right)(\delta+1)^{2\alpha}.
\end{align*}
\end{corollary}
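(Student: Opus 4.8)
The plan is to obtain Corollary~\ref{ThSierpinskiRegular} directly from Theorem~\ref{MainTheorem-S(G,t)} by specializing to the regular case and then collecting terms over all edges. First I would substitute $d(x)=d(y)=\delta$ into the expression for $W_{\{x,y\}}$. With this substitution the first summand becomes a multiple of $\delta^{2\alpha}$, the fourth a multiple of $(\delta+1)^{2\alpha}$, and the two middle summands \emph{coincide}, since parts (ii) and (iii) of Lemma~\ref{SierpinskiGeneral} yield the same count $n^{t-2}(\delta-\tau(x,y))-\psi(t-2)\delta$ when $d(x)=d(y)=\delta$, and $d(x)^\alpha(d(y)+1)^\alpha=(d(x)+1)^\alpha d(y)^\alpha=\delta^\alpha(\delta+1)^\alpha$. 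Thus
\begin{align*}
W_{\{x,y\}}&=n^{t-2}\bigl(n-2\delta+\tau(x,y)\bigr)\delta^{2\alpha}
+2\bigl(n^{t-2}(\delta-\tau(x,y))-\psi(t-2)\delta\bigr)\delta^{\alpha}(\delta+1)^{\alpha}\\
&\quad+\bigl(n^{t-2}(\tau(x,y)+1)+\psi(t-2)(2\delta+1)\bigr)(\delta+1)^{2\alpha}.
\end{align*}

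Next I would sum this identity over all edges $\{x,y\}\in E$. Since $G$ is $\delta$-regular of order $n$, it has $|E|=\tfrac{n\delta}{2}$ edges, and by the relation noted before Lemma~\ref{SierpinskiGeneral} we have $\sum_{\{x,y\}\in E}\tau(x,y)=3\tau(G)$. Consequently the edge-sum of each of the three bracketed coefficients above is a linear combination of $|E|=\tfrac{n\delta}{2}$ and $3\tau(G)$ with coefficients that are explicit polynomials in $n,t,\delta$ (and the fixed quantities $n^{t-2}$, $\psi(t-2)$). For example, the coefficient of $\delta^{2\alpha}$ becomes $n^{t-2}\bigl(\tfrac{n\delta}{2}(n-2\delta)+3\tau(G)\bigr)$, and the coefficient of $(\delta+1)^{2\alpha}$ becomes $n^{t-2}\bigl(3\tau(G)+\tfrac{n\delta}{2}\bigr)+\psi(t-2)(2\delta+1)\tfrac{n\delta}{2}$; the coefficient of $\delta^{\alpha}(\delta+1)^{\alpha}$ is handled the same way.

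The only remaining work is to rewrite these three coefficients in exactly the form displayed in the statement. This is routine algebra; the one identity that does all the bookkeeping is $\psi(t-1)=1+n\psi(t-2)$ (equivalently $\psi(t-1)=\psi(t-2)+n^{t-2}$), which follows immediately from $\psi(t)=\tfrac{n^{t}-1}{n-1}$ and allows one to absorb the term $\tfrac{n^{t-1}\delta}{2}$ into $\tfrac{n\delta}{2}\psi(t-1)$ in the $(\delta+1)^{2\alpha}$-coefficient, and to simplify the cross-term coefficient likewise. I expect the main (and only mild) obstacle to be carrying this last simplification through cleanly, together with a sanity check on the boundary case $t=2$, where $\psi(t-2)=\psi(0)=0$ (and $\psi(1)=1$); there is no conceptual difficulty, as the whole corollary is forced by Theorem~\ref{MainTheorem-S(G,t)}.
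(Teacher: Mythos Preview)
Your plan is precisely the paper's: the corollary is stated as an immediate consequence of Theorem~\ref{MainTheorem-S(G,t)}, obtained by setting $d(x)=d(y)=\delta$ in each $W_{\{x,y\}}$ and summing using $|E|=\tfrac{n\delta}{2}$ and $\sum_{\{x,y\}\in E}\tau(x,y)=3\tau(G)$, exactly as you outline; your verification of the $\delta^{2\alpha}$ and $(\delta+1)^{2\alpha}$ coefficients (the latter via $n^{t-2}+\psi(t-2)=\psi(t-1)$) is correct.

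One warning about the step you deferred as ``routine algebra'': when you actually sum the two middle terms you get
\[
\bigl(n^{t-1}-n\psi(t-2)\bigr)\delta^{2}-6n^{t-2}\tau(G)
\]
as the coefficient of $\delta^{\alpha}(\delta+1)^{\alpha}$, and no use of $\psi(t-1)=1+n\psi(t-2)$ will convert this into the printed $\bigl(n^{t-1}+\psi(t-1)\bigr)\delta^{2}-6n^{t-2}\tau(G)$. The middle line of the displayed formula appears to contain a misprint: for instance, with $G=C_n$ (so $\delta=2$, $\tau(G)=0$) your computation gives $4\bigl(n^{t-1}-n\psi(t-2)\bigr)$, which matches the paper's own Remark on $R_\alpha(S(C_n,t))$, whereas the printed corollary would give $4\bigl(n^{t-1}+\psi(t-1)\bigr)$; the same discrepancy shows up for $K_n$. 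So your method is right, but do not expect the cross-term to reduce to the stated form.
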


A complete graph $K_n$ of order $n\ge 2$ is $(n-1)$-regular and it has $\displaystyle {n \choose 3}$ triangles.  Therefore, the next result follows.

\begin{remark}
For any integers $t,n\ge 2$, $$R_\alpha(S(K_n,t))=n^{\alpha+1}(n-1)^{\alpha+1}+\frac{n^{2\alpha+t+1}-2n^{2(\alpha+1)}+n^{2\alpha+1}}{2}$$
\end{remark}

\begin{remark}
For any integers $t\ge 2$ and $n\ge 4$, 
$$R_\alpha(S(C_n,t))=4^\alpha n^{t-1}(n-4)+4\cdot 6^\alpha\left(n^{t-1}-n\psi(t-2)\right)+9^\alpha n\left(\psi(t-1)+4\psi(t-2)\right).$$
\end{remark}

\begin{corollary}\label{ThSierpinskiBipartite}
Let $G=(U_1\cup U_2,E)$ be a bipartite $(\delta_1,\delta_2)$-semiregular   graph of order $n=n_1+n_2$, where $|U_1|=n_1$ and $|U_2|=n_2$. Then for any integer $t\ge 2$,
  
\begin{align*}
R_\alpha(S(G,t))&=n_1 n^{t-2}\delta_1^{\alpha+1}\delta_2^\alpha (n-\delta_1-\delta_2)+n_1\delta_1^{\alpha+1}(\delta_2+1)^\alpha\left(\delta_2n^{t-2}-\delta_1\psi(t-2)\right)\\
&+n_2(\delta_1+1)^\alpha\delta_2^{\alpha+1}\left(\delta_1 n^{t-2}-\delta_2\psi(t-2)\right)\\
&+n_1\delta_1(\delta_1+1)^\alpha(\delta_2+1)^\alpha\left(n^{t-2}+(\delta_1+\delta_2+1)\psi(t-2)\right). 
\end{align*}
\end{corollary}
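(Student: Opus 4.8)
The plan is to specialize Theorem \ref{MainTheorem-S(G,t)} to the bipartite semiregular case. First I would record the two structural facts that make the sum collapse. Since a bipartite graph is triangle-free, $\tau(x,y)=0$ for every edge $\{x,y\}\in E$. Since $G$ is $(\delta_1,\delta_2)$-semiregular with parts $U_1,U_2$, every edge $\{x,y\}\in E$ has one endpoint, say $x$, in $U_1$ with $d(x)=\delta_1$ and the other endpoint $y$ in $U_2$ with $d(y)=\delta_2$. Substituting $d(x)=\delta_1$, $d(y)=\delta_2$ and $\tau(x,y)=0$ into the expression for $W_{\{x,y\}}$ in Theorem \ref{MainTheorem-S(G,t)} yields a quantity $W$ that does not depend on the chosen edge.

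Next I would count the edges: in a $(\delta_1,\delta_2)$-semiregular bipartite graph one has $|E|=n_1\delta_1=n_2\delta_2$. Hence Theorem \ref{MainTheorem-S(G,t)} gives $R_\alpha(S(G,t))=|E|\cdot W=n_1\delta_1\,W$, and it remains to expand $n_1\delta_1\,W$ term by term and match it against the four summands in the statement. Three of the four terms match after only elementary regrouping, pulling the factor $\delta_1$ together with $\delta_1^\alpha$ in the first two summands and with $(\delta_2+1)^\alpha$ in the last, so that the first summand becomes $n_1 n^{t-2}\delta_1^{\alpha+1}\delta_2^\alpha(n-\delta_1-\delta_2)$, and so on.

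The one place requiring a little care — and essentially the only nonautomatic point — is the contribution coming from $f_{S(G,t)}\left(d(x)+1,d(y)\right)$: expanding $n_1\delta_1\,W$ gives this term as $n_1\delta_1\left(n^{t-2}\delta_1-\psi(t-2)\delta_2\right)(\delta_1+1)^\alpha\delta_2^\alpha$, whereas the statement writes it with leading factor $n_2\delta_2^{\alpha+1}$. These agree precisely because $n_1\delta_1=n_2\delta_2$, so that $n_1\delta_1\,\delta_2^\alpha=n_2\delta_2^{\alpha+1}$; applying this identity converts one form into the other. After this substitution all four terms coincide with those in the corollary, completing the proof. No genuine obstacle arises here: the argument is purely a bookkeeping specialization of the main theorem, the only nonmechanical ingredients being the identification of which endpoint of each edge lies in which part and the edge-count identity $|E|=n_1\delta_1=n_2\delta_2$.
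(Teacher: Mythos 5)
Your proposal is correct and is exactly the derivation the paper intends: the corollary is stated as a direct specialization of Theorem \ref{MainTheorem-S(G,t)}, and your substitutions $\tau(x,y)=0$, $d(x)=\delta_1$, $d(y)=\delta_2$, together with the edge count $|E|=n_1\delta_1=n_2\delta_2$ used to rewrite the third term with leading factor $n_2\delta_2^{\alpha+1}$, reproduce the stated formula term by term.
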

Chemical trees are trees that have no vertex with degree greater than 4. The graph $S(K_{1,3},2)$ is an example of a chemical tree. Notice that for any $t\ge 2$, the Sierpi\'{n}ski graph $S(K_{1,3},t)$, is a chemical tree.

As a particular case of Corollary \ref{ThSierpinskiBipartite} we obtain the following result. 
\begin{remark} For any integers $r,t\ge 2$,
\begin{align*}
R_\alpha(S(K_{1,r},t))&=(r+1)^\alpha\left((r+1)^{t-1}(r-1)+1\right)+2^\alpha r^{\alpha+1}+\\
&+\left(2(r+1)\right)^\alpha\left(2(r+1)^{t-1}-r-2\right).
\end{align*}
\end{remark}

\begin{corollary}
Let $t,n$ be integers such that $t\ge 2$ and $n\ge 3$. Then
$$R_\alpha(S(P_2,t))=2^{\alpha+1}+(2^t-3)2^{2\alpha}$$
and
\begin{align*}
R_\alpha(S(P_n,t))&=2^\alpha n^{t-2}(n-3)\left(2^\alpha n-2^{\alpha+2}+2\right)+\\
&+3^\alpha\left[2^{\alpha+2}(n-3)\left(n^{t-2}-\psi(t-2)\right)+4n^{t-2}-2\psi(t-2)\right]+\\
&+2^{2\alpha+1}\left(n^{t-2}-2\psi(t-2)\right)+\\
&+3^\alpha\left[3^\alpha(n-3)\left(n^{t-2}+5\psi(t-2)\right)+2^{\alpha+1}\left(n^{t-2}+4\psi(t-2)\right)\right].
\end{align*}
\end{corollary}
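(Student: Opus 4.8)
The plan is to apply Theorem~\ref{MainTheorem-S(G,t)} directly in both cases; the only work is the classification of the edges of the path and the subsequent bookkeeping, since the path is triangle-free (so $\tau(x,y)=0$ on every edge).

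For $P_2=K_2$ there is a single edge $\{x,y\}$ with $d(x)=d(y)=1$ and $\tau(x,y)=0$, hence $R_\alpha(S(P_2,t))=W_{\{x,y\}}$. Substituting $n=2$, $d(x)=d(y)=1$, $\tau(x,y)=0$ into the four-term expression for $W_{\{x,y\}}$ and using $\psi(t-2)=\dfrac{2^{t-2}-1}{2-1}=2^{t-2}-1$, the first summand vanishes (because $n-d(x)-d(y)+\tau(x,y)=0$), each of the two middle summands reduces to $2^\alpha$, and the last summand reduces to $(2^t-3)\,2^{2\alpha}$; adding these gives the claimed expression $2^{\alpha+1}+(2^t-3)2^{2\alpha}$.

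For $P_n$ with $n\ge 3$, observe that $E(P_n)$ consists of exactly two ``pendant'' edges, each joining a leaf (degree $1$) to a vertex of degree $2$, together with $n-3$ ``internal'' edges, each joining two vertices of degree $2$. Writing $W^{(1,2)}$ and $W^{(2,2)}$ for the value of $W_{\{x,y\}}$ given by Theorem~\ref{MainTheorem-S(G,t)} when $(d(x),d(y),\tau(x,y))=(1,2,0)$ and $(2,2,0)$ respectively, we obtain
$$R_\alpha(S(P_n,t))=2\,W^{(1,2)}+(n-3)\,W^{(2,2)}.$$
Substituting into the four-term formula gives, for $W^{(1,2)}$, the degree pairs $(1,2),(1,3),(2,2),(2,3)$ with multiplicities $n^{t-2}(n-3)$, $2n^{t-2}-\psi(t-2)$, $n^{t-2}-2\psi(t-2)$, $n^{t-2}+4\psi(t-2)$, and, for $W^{(2,2)}$, the degree pairs $(2,2),(2,3),(3,2),(3,3)$ with multiplicities $n^{t-2}(n-4)$, $2n^{t-2}-2\psi(t-2)$, $2n^{t-2}-2\psi(t-2)$, $n^{t-2}+5\psi(t-2)$. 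Weighting each pair $(a,b)$ by $a^\alpha b^\alpha$, inserting these into the displayed identity, and collecting the coefficients of the resulting powers $2^\alpha$, $3^\alpha$, $2^{2\alpha}$, $6^\alpha$, $9^\alpha$ yields the stated closed formula after the natural regrouping of like powers (for instance, the $4^\alpha$-contribution $n^{t-2}(n-3)(n-4)$ coming from the $(2,2)$-pairs of the internal edges and the $2^\alpha$-contribution $2n^{t-2}(n-3)$ coming from the $(1,2)$-pairs of the pendant edges merge into the factor $2^\alpha n^{t-2}(n-3)(2^\alpha n-2^{\alpha+2}+2)$ of the statement).

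The whole argument is computational, so the only genuine ``obstacle'' is organizing the algebra so that it lands in exactly the factored shape displayed in the statement rather than in some equivalent but unfactored form; there is no conceptual difficulty. Note also that the degenerate case $n=3$ (no internal edges) needs no separate treatment, since all factors of $n-3$ then vanish and the formula collapses to $2\,W^{(1,2)}$.
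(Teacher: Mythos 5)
Your proposal is correct and follows exactly the route the paper intends: the corollary is stated there as a direct specialization of Theorem \ref{MainTheorem-S(G,t)}, and your edge classification (two pendant edges with degree pair $(1,2)$, $n-3$ internal edges with $(2,2)$, all with $\tau(x,y)=0$) together with the computed multiplicities reproduces the stated formula term by term. The handling of $K_2$ and of the degenerate case $n=3$ is also right.
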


In order to continue presenting our results, we need to introduce a definition. Given a graph $G$ on the vertex set $V$, we define the parameter 
$$M_\alpha(G)=\sum_{x\in V}d(x)^\alpha.$$ 
Note that $M_1(G)$ is equal to twice the number of edges of $G$ and $M_2(G)$ is the first Zagreb index. Considering that for any graph $G$ of maximum degree $\Delta(G)$ and minimum degree $\delta(G)$, and any vertex $x\in V(G)$ we have that $d(x)^\alpha+(\delta(G)+1)^\alpha-\Delta(G)^\alpha\le (d(x)+1)^\alpha\le d(x)^\alpha+(\Delta(G)+1)^\alpha-\delta(G)^\alpha$ and replacing in Theorem \ref{MainTheorem-S(G,t)}, we deduce the following bounds. 

\begin{theorem}
For any triangle free graph $G$ of order $n\ge 2$, maximum degree $\Delta$ and minimum degree $\delta$, and any integer $t\ge 2$,
$$\beta_L(S(G,t))\le R_\alpha(S(G,t))\le \beta_U(S(G,t)),$$
where
\begin{align*}
\beta_L(S(G,t))&=n^{t-2}(n-\Delta)R_\alpha(G)+\\
&+2\left(n^{t-2}\delta-\Delta\psi(t-2)\right)\left(R_\alpha(G)+M_{\alpha+1}(G)\left[(\delta+1)^\alpha-\Delta^\alpha\right]\right)+\\
&+\left(n^{t-2}+(2\delta+1)\psi(t-2)\right)\left(R_\alpha(G)+2M_{\alpha+1}(G)\left[(\delta+1)^\alpha-\Delta^\alpha\right]\right)+\\
&+\left(n^{t-2}+(2\delta+1)\psi(t-2)\right)\dfrac{M_1(G)}{2}\left[(\delta+1)^\alpha-\Delta^\alpha\right]^2,
\end{align*}
and
\begin{align*}
\beta_U(S(G,t))&=n^{t-2}(n-\delta)R_\alpha(G)+\\
&+2\left(n^{t-2}\Delta-\delta\psi(t-2)\right)\left(R_\alpha(G)+M_{\alpha+1}(G)\left[(\Delta+1)^\alpha-\delta^\alpha\right]\right)+\\
&+\left(n^{t-2}+(2\Delta+1)\psi(t-2)\right)\left(R_\alpha(G)+2M_{\alpha+1}(G)\left[(\Delta+1)^\alpha-\delta^\alpha\right]\right)+\\
&+\left(n^{t-2}+(2\Delta+1)\psi(t-2)\right)\dfrac{M_1(G)}{2}\left[(\Delta+1)^\alpha-\delta^\alpha\right]^2.
\end{align*}
Moreover, $\beta_L(S(G,t))=R_\alpha(S(G,t))=\beta_U(S(G,t))$ if and only if $G$ is a triangle free regular graph.
\end{theorem}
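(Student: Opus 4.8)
The plan is to push the exact identity of Theorem~\ref{MainTheorem-S(G,t)} through the elementary two-sided estimate for the shifted degree powers, replacing inside each summand $W_{\{x,y\}}$ every degree-dependent quantity by an extreme value, always on the side that preserves the desired inequality.

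First, the triangle-free hypothesis makes $\tau(x,y)=0$ for every edge, so in $W_{\{x,y\}}$ the four multiplicities become $n^{t-2}(n-d(x)-d(y))$, $n^{t-2}d(y)-\psi(t-2)d(x)$, $n^{t-2}d(x)-\psi(t-2)d(y)$ and $n^{t-2}+\psi(t-2)(d(x)+d(y)+1)$. I would first check that all four are nonnegative: the first because a triangle-free graph satisfies $d(x)+d(y)=|N(x)\cup N(y)|\le n$ on every edge; the middle two because $\psi(t-2)=\tfrac{n^{t-2}-1}{n-1}<n^{t-2}$ while $d(x),d(y)\le n-1<n$, whence $n^{t-2}d(y)\ge n^{t-2}>\psi(t-2)(n-1)\ge\psi(t-2)d(x)$; and the fourth trivially. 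One also notes $d(z)^\alpha>0$ for every $z$.

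Next I would bring in the estimate quoted just before the statement, $d(z)^\alpha+m\le (d(z)+1)^\alpha\le d(z)^\alpha+M$ with $m=(\delta+1)^\alpha-\Delta^\alpha$ and $M=(\Delta+1)^\alpha-\delta^\alpha$, together with $\delta\le d(z)\le\Delta$. Because every shifted power and every multiplicity in $W_{\{x,y\}}$ is paired with a nonnegative factor, I would replace $(d(z)+1)^\alpha$ and each degree occurring in a multiplicity by its small (resp. large) extreme to obtain a lower (resp. upper) bound on $W_{\{x,y\}}$; in the bilinear term $(d(x)+1)^\alpha(d(y)+1)^\alpha$ one uses $0<(d(z)+1)^\alpha\le d(z)^\alpha+M$ so that the product of the two estimates stays on the correct side. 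After this step $W_{\{x,y\}}$ is trapped between two expressions that depend on $x$ and $y$ only through $d(x)^\alpha$ and $d(y)^\alpha$.

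Finally, summing over the edges of $G$ and invoking $\sum_{\{x,y\}\in E}d(x)^\alpha d(y)^\alpha=R_\alpha(G)$, $\sum_{\{x,y\}\in E}(d(x)^\alpha+d(y)^\alpha)=\sum_{z\in V}d(z)^{\alpha+1}=M_{\alpha+1}(G)$ and $\sum_{\{x,y\}\in E}1=|E|=\tfrac{M_1(G)}{2}$, I would collect the coefficients of $n^{t-2}$ and of $\psi(t-2)$; the outcome is exactly $\beta_L(S(G,t))$ and $\beta_U(S(G,t))$. For the equality clause the only inequalities used are $\delta\le d(z)\le\Delta$ and $d(z)^\alpha+m\le (d(z)+1)^\alpha\le d(z)^\alpha+M$; $\beta_L=\beta_U$ forces $m=M$, i.e. $(\delta+1)^\alpha-\Delta^\alpha=(\Delta+1)^\alpha-\delta^\alpha$, equivalently $\delta=\Delta$ since $\alpha\neq0$, and conversely for a triangle-free $\delta$-regular $G$ one has $(d(z)+1)^\alpha=d(z)^\alpha+m=d(z)^\alpha+M$ for every $z$, every step becomes an equality, and $\beta_L=R_\alpha(S(G,t))=\beta_U$, in agreement with Corollary~\ref{ThSierpinskiRegular} specialized to $\tau(G)=0$. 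I expect the main difficulty to be the careful sign bookkeeping that guarantees each substitution is in the intended direction — above all for the bilinear term — and the routine but lengthy algebra of matching the regrouped sums to the stated closed forms.
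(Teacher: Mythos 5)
Your overall strategy is the one the paper itself gestures at: set $\tau(x,y)=0$, substitute the elementary estimate for $(d(z)+1)^\alpha$ into Theorem~\ref{MainTheorem-S(G,t)}, bound the multiplicities by their extreme values, and regroup. Your preliminary observations (on an edge of a triangle-free graph $d(x)+d(y)=|N(x)\cup N(y)|\le n$, the four multiplicities are nonnegative, and $d(z)^\alpha>0$) are correct. Nevertheless the proposal has two genuine gaps. First, the estimate $d(z)^\alpha+(\delta+1)^\alpha-\Delta^\alpha\le (d(z)+1)^\alpha\le d(z)^\alpha+(\Delta+1)^\alpha-\delta^\alpha$ is valid only for $\alpha>0$; for $\alpha<0$ the map $s\mapsto s^\alpha$ is decreasing and the two bounds exchange roles (take $\alpha=-1$, $d=\delta=1$, $\Delta=2$: the claimed lower bound is $1+\tfrac12-\tfrac12=1$ while $(d+1)^{-1}=\tfrac12$). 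Since the theorem is stated for arbitrary $\alpha$ and the classical Randi\'c index has $\alpha=-1/2$, your very first substitution points the wrong way on half of the parameter range; at minimum a case split with $\beta_L$ and $\beta_U$ interchanged would be required.

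Second, and more seriously, the step you defer as ``routine but lengthy algebra'' --- collecting coefficients and obtaining ``exactly $\beta_L$ and $\beta_U$'' --- cannot be completed, because the displayed formulas are not what a consistent term-by-term substitution yields (for instance, bounding $n-d(x)-d(y)$ from below produces the coefficient $n-2\Delta$, not the $n-\Delta$ in the first term of $\beta_L$), and in fact the stated inequality is false. Take $G=C_4$, $t=2$, $\alpha=1$: a direct count (or Theorem~\ref{MainTheorem-S(G,t)}) gives $R_1(S(C_4,2))=16\cdot 6+4\cdot 9=132$, while $R_1(C_4)=16$, $M_2(C_4)=16$, $M_1(C_4)=8$, $(\delta+1)-\Delta=1$ and $\psi(0)=0$ give $\beta_L=32+128+48+4=212>132$. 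This also refutes the equality clause, since $C_4$ is triangle-free and regular. So no amount of careful sign bookkeeping will close the argument; the target expressions themselves must be rederived. Relatedly, your consistency check against Corollary~\ref{ThSierpinskiRegular} with $\tau(G)=0$ is unsafe: the same example shows that the middle coefficient there should be $\left(n^{t-1}-n\psi(t-2)\right)\delta^2-6n^{t-2}\tau(G)$ rather than $\left(n^{t-1}+\psi(t-1)\right)\delta^2-6n^{t-2}\tau(G)$.
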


\section{Computing the General Randi\'c index of $P(G,t)$}

The main result of this section is Theorem \ref{MainTheorem-P(G,t)} which provides a formula for the general Randi\'c index of $P(G,t)$, where the base graph $G$ and $\alpha$ are arbitrary, and $t$ is an integer greater than one. 

From now on, given a graph $G$ and a vertex $x\in V(G)$, we refer to the degree of $x$ in $G$ as $d_G(x)$. If there is no  ambiguity, we will continue writing $d(x)$.

\begin{remark}
For any graph $G$ of order $n\ge 2$,
$$R_\alpha(P(G,1))=n^\alpha\sum_{x\in V}(d(x)+1)^\alpha + \sum_{\{x,y\}\in E}(d(x)+1)^\alpha(d(y)+1)^\alpha.$$
\end{remark}

\begin{proof}
Note that the vertex $a_{1_1}$ has degree $n$ and it is adjacent to all vertices of $G$. Hence, for every vertex $x\in V$, there is one edge $\{a_{1_1},x\}$, where $a_{1_1}$ has degree $n$ and $x$ has degree $d_{P(G,1)}(x)=d_G(x)+1$. For the remaining edges $\{x,y\}$ we have that $d_{P(G,1)}(x)=d_G(x)+1$ and $d_{P(G,1)}(y)=d_G(y)+1$. Therefore, the result follows. 
\end{proof}

\begin{corollary}\label{coroPRegularG1}
For any $\delta$-regular graph $G$ of order $n\ge 2$,
$$R_\alpha(P(G,1))=n^{\alpha+1}(\delta+1)^\alpha+\frac{n\delta(\delta+1)^{2\alpha}}{2}.$$
In particular, for complete graphs, 
$$R_\alpha(P(K_n,1))=\frac{n^{2\alpha+1}(n+1)}{2}.$$
\end{corollary}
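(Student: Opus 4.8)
The plan is simply to specialize the formula for $R_\alpha(P(G,1))$ established in the preceding Remark to the $\delta$-regular case. First I would observe that when $G$ is $\delta$-regular, every vertex $x\in V$ has $d(x)=\delta$, so the vertex sum collapses: $\sum_{x\in V}(d(x)+1)^\alpha = n(\delta+1)^\alpha$, and multiplying by the factor $n^\alpha$ gives the first summand $n^{\alpha+1}(\delta+1)^\alpha$.

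Next I would treat the edge sum. Since a $\delta$-regular graph on $n$ vertices has $\tfrac{n\delta}{2}$ edges and each edge $\{x,y\}$ contributes $(d(x)+1)^\alpha(d(y)+1)^\alpha=(\delta+1)^{2\alpha}$, the edge sum equals $\tfrac{n\delta}{2}(\delta+1)^{2\alpha}$. Adding the two contributions yields the asserted closed form $R_\alpha(P(G,1))=n^{\alpha+1}(\delta+1)^\alpha+\tfrac{n\delta}{2}(\delta+1)^{2\alpha}$.

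For the special case $G=K_n$ I would substitute $\delta=n-1$, hence $\delta+1=n$, so the expression becomes $n^{\alpha+1}\cdot n^\alpha+\tfrac{n(n-1)}{2}\cdot n^{2\alpha}=n^{2\alpha+1}\bigl(1+\tfrac{n-1}{2}\bigr)=\tfrac{n^{2\alpha+1}(n+1)}{2}$. There is no genuine obstacle here; the only step needing a little care is this final algebraic simplification for $K_n$, since the whole statement is an immediate consequence of the Remark.
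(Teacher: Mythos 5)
Your proposal is correct and is exactly the intended derivation: the corollary is an immediate specialization of the preceding Remark, obtained by setting $d(x)=\delta$ for every vertex, using $|E|=\tfrac{n\delta}{2}$, and then substituting $\delta=n-1$ for $K_n$; the algebra in your final simplification checks out.
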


\begin{corollary}\label{CoroSemiRegularG1}
Let $G=(U_1\cup U_2,E)$ be a bipartite $(\delta_1,\delta_2)$-semiregular graph of order $n=n_1+n_2$, where $|U_1|=n_1$ and $|U_2|=n_2$.  Then,
$$R_\alpha(P(G,1))=n^\alpha\left(n_1(\delta_1+1)^\alpha + n_2(\delta_2+1)^\alpha\right) + n_1\delta_1\left((\delta_1+1)(\delta_2+1)\right)^\alpha.$$
\end{corollary}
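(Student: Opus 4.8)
The plan is to derive this directly from the formula for $R_\alpha(P(G,1))$ established in the Remark above, namely
$R_\alpha(P(G,1))=n^\alpha\sum_{x\in V}(d(x)+1)^\alpha + \sum_{\{x,y\}\in E}(d(x)+1)^\alpha(d(y)+1)^\alpha$,
by specializing both of its sums to a bipartite $(\delta_1,\delta_2)$-semiregular graph. First I would record the two structural facts that are needed: every vertex of $U_1$ has degree $\delta_1$ and every vertex of $U_2$ has degree $\delta_2$, and every edge of $G$ joins a vertex of $U_1$ to a vertex of $U_2$, so that $|E|=n_1\delta_1=n_2\delta_2$ by double counting of edge-endpoints.

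For the vertex sum, splitting $V=U_1\cup U_2$ gives $\sum_{x\in V}(d(x)+1)^\alpha=n_1(\delta_1+1)^\alpha+n_2(\delta_2+1)^\alpha$, which after multiplication by $n^\alpha$ produces the first summand of the claimed identity. For the edge sum, each of the $|E|=n_1\delta_1$ edges contributes the same quantity $(\delta_1+1)^\alpha(\delta_2+1)^\alpha=\bigl((\delta_1+1)(\delta_2+1)\bigr)^\alpha$, so the edge sum equals $n_1\delta_1\bigl((\delta_1+1)(\delta_2+1)\bigr)^\alpha$, which is the second summand. Adding the two contributions yields the stated formula.

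There is no real obstacle in this argument; it is essentially a one-line substitution into the Remark. The only points worth stating explicitly are that one must use $|E|=n_1\delta_1$ (equivalently $n_2\delta_2$) to count the edges, which is also why the final expression looks asymmetric in the two colour classes, and that the Remark requires $G$ to have order $n\ge 2$, which is harmless here. Since the analogous one-line specializations to regular and to complete graphs already appear as Corollary \ref{coroPRegularG1}, this corollary is to be proved in exactly the same spirit.
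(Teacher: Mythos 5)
Your proposal is correct and follows exactly the route the paper intends: the corollary is an immediate specialization of the preceding Remark's formula for $R_\alpha(P(G,1))$, using $\sum_{x\in V}(d(x)+1)^\alpha=n_1(\delta_1+1)^\alpha+n_2(\delta_2+1)^\alpha$ and $|E|=n_1\delta_1$. Nothing is missing.
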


Given a graph $G$ and a vertex $x\in V$, the number of copies of  $x$ having degree  $r$ in $S(G,t)$ will be denoted by $g_{_{S(G,t)}}(r)$.

\begin{lemma}\label{SierpinskiVertexDegree}
For any graph $G$ of order $n$, any vertex $x\in V$ and any integer $t\ge 2,$
\begin{enumerate}[{\rm (i)}]
\item $g_{_{S(G,t)}}(d(x))=n^{t-1}-d(x)\psi(t-1).$
\item $g_{_{S(G,t)}}(d(x)+1)=d(x)\psi(t-1).$
\end{enumerate}
\end{lemma}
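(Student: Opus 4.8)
The plan is to count, for a fixed vertex $x\in V$, how many of the $n^{t-1}$ copies of $x$ in $S(G,t)$ have degree $d(x)$ and how many have degree $d(x)+1$; since $S(G,t)$ has exactly $n^{t-1}$ vertices of the form $w x$ with $w\in\{1,\dots,n\}^{t-1}$ (equivalently, words of length $t$ ending in the letter $x$), and every such copy has degree either $d(x)$ or $d(x)+1$, it suffices to compute $g_{_{S(G,t)}}(d(x)+1)$, and then (i) follows from (ii) by subtraction. I would prove (ii) by induction on $t$, using the recursive structure of $S(G,t)$ already recalled in the excerpt.

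For the base case $t=2$: a copy $zx$ of $x$ in $S(G,2)$ has degree $d(x)+1$ precisely when $z\in N(x)$ (the extra edge being the one joining $zx$ to $xz$), and degree $d(x)$ otherwise. Hence $g_{_{S(G,2)}}(d(x)+1)=|N(x)|=d(x)=d(x)\psi(1)$, since $\psi(1)=1$. For the inductive step, recall that $S(G,t)$ is obtained by taking $n$ disjoint copies of $S(G,t-1)$ — prepending each letter $j\in\{1,\dots,n\}$ — and then, for each edge $\{j,k\}\in E$, adding one edge between the extreme vertex $j k k\cdots k$ of the $j$-th copy and $k j j \cdots j$ of the $k$-th copy. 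As noted in the excerpt, a vertex $zw$ of $S(G,t)$ keeps the degree that $w$ has in $S(G,t-1)$, except when $w=jj\cdots j$ is an extreme vertex and $z\in N(j)$, in which case its degree goes up by one. Thus the copies of $x$ of degree $d(x)+1$ in $S(G,t)$ split into two types: (a) those of the form $z w$ where $w$ is a copy of $x$ of degree $d(x)+1$ already in $S(G,t-1)$ and $w$ is \emph{not} the extreme vertex $xx\cdots x$ — there are $n$ choices of prefix letter $z$ for each such $w$, contributing $n\bigl(g_{_{S(G,t-1)}}(d(x)+1)-1\bigr)$; and (b) those arising from the extreme vertex $w=xx\cdots x$, namely $zxx\cdots x$ with $z\in N(x)$, which has degree $d(x)+1$ by the construction (whereas $xx\cdots x$ itself has degree $d(x)$ in $S(G,t-1)$), contributing $|N(x)|=d(x)$. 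Adding these, $g_{_{S(G,t)}}(d(x)+1)=n\,g_{_{S(G,t-1)}}(d(x)+1)-n+d(x)$, and substituting the inductive hypothesis $g_{_{S(G,t-1)}}(d(x)+1)=d(x)\psi(t-2)$ together with the identity $n\psi(t-2)+1=\psi(t-1)$ gives $g_{_{S(G,t)}}(d(x)+1)=d(x)\bigl(n\psi(t-2)+1\bigr)-n+d(x)-d(x)=d(x)\psi(t-1)-n+d(x)$.

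Let me recompute that last line more carefully, since it is the one place the bookkeeping can slip. We have $n\,g_{_{S(G,t-1)}}(d(x)+1)=n\cdot d(x)\psi(t-2)=d(x)\bigl(\psi(t-1)-1\bigr)$, so $g_{_{S(G,t)}}(d(x)+1)=d(x)\psi(t-1)-d(x)-n+d(x)=d(x)\psi(t-1)-n$. This is \emph{not} the claimed formula, which signals that the correct recursion is not $n\,g_{_{S(G,t-1)}}(d(x)+1)-n+d(x)$ but rather $n\,g_{_{S(G,t-1)}}(d(x)+1)+d(x)$ — i.e. the copies of $x$ that had degree $d(x)+1$ in $S(G,t-1)$ retain it in \emph{all} $n$ prefixed copies (even the extreme one $xx\cdots x$ is not among them, since it had degree $d(x)$), and the only \emph{new} high-degree copies are the $d(x)$ vertices $zxx\cdots x$ with $z\in N(x)$. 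So the clean recursion is $g_{_{S(G,t)}}(d(x)+1)=n\,g_{_{S(G,t-1)}}(d(x)+1)+d(x)$, which telescopes to $g_{_{S(G,t)}}(d(x)+1)=d(x)\bigl(1+n+\cdots+n^{t-2}\bigr)=d(x)\psi(t-1)$, and then $g_{_{S(G,t)}}(d(x))=n^{t-1}-d(x)\psi(t-1)$.

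The main obstacle, as the above false start illustrates, is precisely this accounting of the extreme vertices: one must verify that for every word $w$ of length $t-1$ that is a copy of $x$, the vertex $wx$ (sorry — the vertex $w$ itself, a copy of $x$) has the property that its degree in $S(G,t-1)$ determines the degrees of all $n$ prefixed copies $zw$ in $S(G,t)$, with the \emph{sole} exception being the extreme vertex $w=xx\cdots x$, whose degree $d(x)$ is \emph{raised} to $d(x)+1$ in exactly $|N(x)|=d(x)$ of its $n$ prefixed copies (and stays $d(x)$ in the other $n-d(x)$). Once this is pinned down — essentially quoting the paragraph in the excerpt beginning ``For any $t\ge 3$'' — the induction and the telescoping are routine, and part (i) is immediate from part (ii) together with the fact that the $n^{t-1}$ copies of $x$ fall into exactly the two degree classes $d(x)$ and $d(x)+1$.
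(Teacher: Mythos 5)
Your argument is correct, but it reaches the formula by a different route than the paper. You prove (ii) by induction on $t$, extracting from the recursive construction the recursion $g_{_{S(G,t)}}(d(x)+1)=n\,g_{_{S(G,t-1)}}(d(x)+1)+d(x)$ (base case $g_{_{S(G,2)}}(d(x)+1)=d(x)$) and telescoping; the paper instead counts directly, observing that the copies of $x$ of degree $d(x)+1$ in $S(G,t)$ are exactly the words $z_1\cdots z_k\,y\,x\cdots x$ with $y\in N(x)$ and $0\le k\le t-2$, which gives $g_{_{S(G,t)}}(d(x)+1)=d(x)\sum_{k=0}^{t-2}n^k=d(x)\psi(t-1)$ in one line. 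The two are equivalent in substance: unrolling your recursion stratifies the same set of words by the length $k$ of the prefix preceding the letter $y$, and the term $d(x)$ you add at each stage corresponds to the new vertices $z\,x\cdots x$ with $z\in N(x)$. What your version buys is that the delicate point --- that the extreme vertex $xx\cdots x$ of $S(G,t-1)$ has degree $d(x)$, hence is \emph{not} among the degree-$(d(x)+1)$ copies being multiplied by $n$, and that it spawns exactly $d(x)$ new degree-$(d(x)+1)$ copies --- is made fully explicit, whereas the paper asserts its characterization without justification. What the paper's version buys is brevity. One presentational remark: your write-up retains a false start (the recursion $n(g_{_{S(G,t-1)}}(d(x)+1)-1)+d(x)$, which you then detect and correct); in a final version you should present only the corrected recursion, since the correction you arrive at is the right one and the induction then closes cleanly, with (i) following by subtraction from the total of $n^{t-1}$ copies.
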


\begin{proof}
There are two different possibilities for the degree of any copy of vertex $x$ in $S(G,t)$, namely $d(x)$ and $d(x)+1$. Every copy of vertex $x$ having degree $d(x)+1$ in $S(G,t)$ is a vertex of the form $z_1\cdots z_kyx\cdots x$, where $y\in N(x)$, $0\le k\le t-2$ and $z_i\in V(G)$ for $1\le i\le k$. So,
$$g_{_{S(G,t)}}(d(x)+1)=d(x)\sum_{i=0}^{t-2}n^i=d(x)\psi(t-1).$$
On the other hand, every copy of vertex $x$ in $S(G,t)$ is a vertex of the form $z_1\cdots z_{t-1}x$, where $z_i\in V(G)$ for $1\le i\le t-1$. Hence, we have $n^{t-1}$ copies of vertex $x$ in $S(G,t)$, and as a consequence,
$$g_{_{S(G,t)}}(d(x))=n^{t-1}-g_{_{S(G,t)}}(d(x)+1)=n^{t-1}-d(x)\psi(t-1).$$
\end{proof}

Now, we state a formula for the general Randi\'c of $P(G,t)$, where the base graph $G$ and the exponent $\alpha$ are arbitrary, and $t$ is an integer greater than one.

\begin{theorem}\label{MainTheorem-P(G,t)}
For any graph $G=(V,E)$ of order $n\ge 2$ and any integer $t\ge 2$,
$$R_\alpha(P(G,t))=\sum\limits_{i=1}^{7}\beta_{i},$$
where
\begin{flalign*}
\beta_1&=n^\alpha\sum_{x\in V}(d(x)+2)^\alpha,\quad \beta_2=\sum_{\{x,y\}\in E}(d(x)+2)^\alpha(d(y)+2)^\alpha,&
\end{flalign*}
\begin{flalign*}
\beta_3&=(n+1)^\alpha n\psi(t-2)\sum_{x\in V}(d(x)+2)^\alpha+&\\
&+\dfrac{(n+1)^\alpha\left(t-2-n\psi(t-2)\right)}{(n-1)}\sum_{x\in V}d(x)(d(x)+2)^\alpha+\\
&+\dfrac{(n+1)^\alpha\left(t-2-n\psi(t-2)\right)}{(1-n)}\sum_{x\in V}d(x)(d(x)+3)^\alpha,
\end{flalign*}
$\displaystyle\beta_4=\sum_{\{x,y\}\in E}W_{4\{x,y\}}$ with 
\begin{flalign*}
W_{4\{x,y\}}&=(d(x)+2)^\alpha(d(y)+2)^\alpha\left(n-d(x)-d(y)+\tau(x,y)\right)\psi(t-2)+&\\
&+(d(x)+2)^\alpha(d(y)+3)^\alpha\left(\left(d(y)-\tau(x,y)\right)\psi(t-2)+d(x)\dfrac{t-2-\psi(t-2)}{n-1}\right)+\\
&+(d(x)+3)^\alpha(d(y)+2)^\alpha\left(\left(d(x)-\tau(x,y)\right)\psi(t-2)+ d(y)\dfrac{t-2-\psi(t-2)}{n-1}\right)+\\
&+(d(x)+3)^\alpha(d(y)+3)^\alpha\left(\tau(x,y)+1\right)\psi(t-2)+\\
&+(d(x)+3)^\alpha(d(y)+3)^\alpha\left(d(x)+d(y)+1\right)\dfrac{t-2-\psi(t-2)}{1-n},
\end{flalign*}
\begin{flalign*}
\beta_5&=(n+1)^\alpha\psi(t-1)\sum_{x\in V}(d(x)+2)^\alpha+&\\
&+\dfrac{(n+1)^\alpha(t-1-\psi(t-1))}{(n-1)}\sum_{x\in V}d(x)(d(x)+2)^\alpha+\\
&+\dfrac{(n+1)^\alpha(t-1-\psi(t-1))}{(1-n)}\sum_{x\in V}d(x)(d(x)+3)^\alpha,
\end{flalign*}
\begin{flalign*}
\beta_6&=(n+1)^\alpha\sum_{x\in V}(d(x)+1)^\alpha\left(n^{t-1}-d(x)\psi(t-1)\right)+&\\
&+(n+1)^\alpha\psi(t-1)\sum_{x\in V}d(x)(d(x)+2)^\alpha,
\end{flalign*} and
$\displaystyle\beta_7=\sum_{\{x,y\}\in E}W_{7\{x,y\}}$ with 
\begin{flalign*}
W_{7\{x,y\}}&=(d(x)+1)^\alpha(d(y)+1)^\alpha n^{t-2}\left(n-d(x)-d(y)+\tau(x,y)\right)+&\\
&+(d(x)+1)^\alpha(d(y)+2)^\alpha\left(n^{t-2}\left(d(y)-\tau(x,y)\right)-\psi(t-2)d(x)\right)+\\
&+(d(x)+2)^\alpha(d(y)+1)^\alpha\left(n^{t-2}\left(d(x)-\tau(x,y)\right)-\psi(t-2)d(y)\right)+\\
&+(d(x)+2)^\alpha(d(y)+2)^\alpha\left(n^{t-2}\left(\tau(x,y)+1\right)+\psi(t-2)\left(d(x)+d(y)+1\right)\right).
\end{flalign*}
\end{theorem}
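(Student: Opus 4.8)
The plan is to split the edge set $E^*$ of $P(G,t)$, following the iterative construction, into seven pairwise disjoint classes whose union is $E^*$, and to show that the contribution of the $i$-th class to $R_\alpha(P(G,t))=\sum_{\{u,v\}\in E^*}(\deg(u)\deg(v))^\alpha$ is exactly $\beta_i$. The seven classes are: (a) the edges of $B_1$, joining the apex $a_{1_1}$ to the vertices of $S(G,1)=G$; (b) the internal edges $E_1$ of $S(G,1)$; (c) the edges of $B_i$ for $2\le i\le t-1$; (d) the internal edges $E_i$ of $S(G,i)$ for $2\le i\le t-1$; (e) the edges of $\bigcup_{i=1}^{t-1}C_i$; (f) the edges of $B_t$; and (g) the internal edges $E_t$ of $S(G,t)$. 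Since $\{B_i\}_{i=1}^{t}$, $\{E_i\}_{i=1}^{t}$ and $\{C_i\}_{i=1}^{t-1}$ partition $E^*$, so do these seven classes.

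The first step is to record the degree of every vertex of $P(G,t)$. The apex $a_{1_1}$ has degree $n$; every apex $a_{i_j}$ with $2\le i\le t$ has degree $n+1$, being joined to the $n$ vertices of one copy of $G$ inside $S(G,i)$ through $B_i$ and to the single vertex $v_{(i-1)_j}$ through $C_{i-1}$. A vertex $v$ of $S(G,i)$ with $1\le i\le t-1$ has degree $\deg_{S(G,i)}(v)+2$ in $P(G,t)$ (an extra edge to its level-$i$ apex via $B_i$ and one to its level-$(i+1)$ apex via $C_i$), while a vertex $v$ of $S(G,t)$ has degree $\deg_{S(G,t)}(v)+1$. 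Recalling that any copy of $x\in V$ in $S(G,i)$ has degree $d(x)$ or $d(x)+1$, and that the endpoints of any copy of an edge $\{x,y\}\in E$ have degrees in $\{d(x),d(x)+1\}\times\{d(y),d(y)+1\}$, we see that in each of the seven classes the product of endpoint degrees takes a fully explicit form once the $S(G,i)$-degrees are known.

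The remainder is a counting computation, carried out class by class. Classes (a) and (b) are immediate and give $\beta_1$ and $\beta_2$. For classes (c) and (f) we use Lemma~\ref{SierpinskiVertexDegree}, which counts, for each $x\in V$, how many copies of $x$ in $S(G,i)$ have degree $d(x)$ and how many have degree $d(x)+1$; those formulas also hold for $i=1$ because $\psi(0)=0$. Summing $g_{S(G,i)}(d(x))$ and $g_{S(G,i)}(d(x)+1)$ over $2\le i\le t-1$ reduces to the geometric evaluations $\sum_{i=2}^{t-1}n^{i-1}=n\psi(t-2)$ and $\sum_{i=2}^{t-1}\psi(i-1)=\frac{n\psi(t-2)-(t-2)}{n-1}$, which produce $\beta_3$; taking $i=t$ directly produces $\beta_6$. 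For class (e), each edge of $C_i$ joins an apex of degree $n+1$ to a vertex of $S(G,i)$, of degree $\deg_{S(G,i)}(v)+2$ since $i\le t-1$, so Lemma~\ref{SierpinskiVertexDegree} again applies, now summed over $1\le i\le t-1$ using $\sum_{i=1}^{t-1}n^{i-1}=\psi(t-1)$ and $\sum_{i=1}^{t-1}\psi(i-1)=\frac{\psi(t-1)-(t-1)}{n-1}$; this gives $\beta_5$. Finally, for classes (d) and (g) we invoke Lemma~\ref{SierpinskiGeneral}, which counts copies of $\{x,y\}\in E$ in $S(G,i)$ with each of the four endpoint-degree pairs; summing over $2\le i\le t-1$ with $\sum_{i=2}^{t-1}n^{i-2}=\psi(t-2)$ and $\sum_{i=2}^{t-1}\psi(i-2)=\frac{\psi(t-2)-(t-2)}{n-1}$ gives $W_{4\{x,y\}}$, hence $\beta_4$, while taking $i=t$ directly gives $W_{7\{x,y\}}$, hence $\beta_7$. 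Adding $\beta_1,\dots,\beta_7$ completes the proof.

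I expect the difficulty to be bookkeeping rather than conceptual. One must be meticulous about the two boundary levels: $i=1$, where $a_{1_1}$ has degree $n$ instead of $n+1$ (this is why classes (a), (b) are isolated as $\beta_1,\beta_2$), and $i=t$, where vertices of $S(G,t)$ gain only one extra edge instead of two (this is why $\beta_6,\beta_7$ are isolated). One must also rewrite the telescoped sums $\sum\psi(\cdot)$ in the compact forms $\frac{t-2-n\psi(t-2)}{n-1}$, $\frac{t-2-\psi(t-2)}{n-1}$ and $\frac{t-1-\psi(t-1)}{n-1}$ appearing in the statement. No single step is hard, but four distinct summation patterns over overlapping ranges of $i$ make the careful seven-fold partition of $E^*$ the indispensable organizing device.
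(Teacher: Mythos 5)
Your proposal is correct and follows essentially the same route as the paper: the identical seven-fold partition of the edge set (the two boundary levels $i=1$ and $i=t$ isolated, the middle levels summed), the same use of Lemmas~\ref{SierpinskiGeneral} and~\ref{SierpinskiVertexDegree}, and the same geometric-sum evaluations. The only difference is cosmetic --- you label the classes via $B_i$, $E_i$, $C_i$ rather than by vertex membership --- and your evaluation $\sum_{i=1}^{t-1}\psi(i-1)=\frac{\psi(t-1)-(t-1)}{n-1}$ is in fact the correct one matching the stated $\beta_5$, where the paper's proof contains a small typographical slip.
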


\begin{proof}
Let $d(u),d(v)$ be degrees of $u,v$ in $P(G,t)$, respectively. We differentiate the following cases for any edge $\{u,v\}$ of $P(G,t)$. 
 
\begin{enumerate}
\item $u=a_{1_1}$ and $v\in V_1.$ In this case, there are $n$ edges $\{u,v\}$ with $d(u)=n$ and $d_{P(G,t)}(v)=d_G(v)+2.$ Then the contribution of these edges to the General Randi\'{c} index is equal to $\beta_1$.

\item $u,v \in V_1.$ In this case, each extreme vertex of an edge $\{u,v\}$ has degree $d_{P(G,t)}(u)=d_G(u)+2$ and $d_{P(G,t)}(v)=d_G(v)+2$. So, the contribution of these edges to the General Randi\'{c} index is equal to $\beta_2$.

\item $u\in A_i$ and $v\in V_i$ for $2\le i\le t-1$. We assume that $v=wx$, where $w\in V^{i-1}$ and $x\in V$. In this case $d(u)=n+1$ and, by Lemma \ref{SierpinskiVertexDegree}, there are $g_{_{S(G,i)}}(d(x)+1)=d(x)\psi(i-1)$ edges $\{u,v\}$ where $v$ has degree $d(v)=d(x)+3$ and there are $g_{_{S(G,i)}}(d(x))=n^{i-1}-d(x)\psi(i-1)$ edges $\{u,v\}$ where $d(v)=d(x)+2$. Thus, the contribution of these edges to the General Randi\'{c} index is equal to $\displaystyle\sum_{i=2}^{t-1}\sum_{x\in V}W'_{3\{x\}}$, where, 
$$W'_{3\{x\}}=(n+1)^\alpha\sum_{i=0}^1(d(x)+i+2)^\alpha g_{_{S(G,i)}}(d(x)+i).$$
Since $\displaystyle\sum_{i=2}^{t-1}n^{i-1}=n\psi(t-2)$ and $\displaystyle\sum_{i=2}^{t-1}\psi(i-1)=\dfrac{t-2-n\psi(t-2)}{1-n}$, we obtain $\displaystyle\beta_3=\sum_{i=2}^{t-1}\sum_{x\in V}W'_{3\{x\}}$.

\item $u,v\in V_i,$ for $2\leq i\leq t-1$. We assume that $u=wx$ and $v=w'y$, where $w,w'\in V^{i-1}$ and $\{x,y\}\in E$.
By Lemma \ref{SierpinskiGeneral}, there are
\begin{description}
\item $f_{S(G,i)}\left(d(x),d(y)\right)=n^{i-2}\left(n-d(x)-d(y)+\tau(x,y)\right)$ edges $\{u,v\}$ where $d(u)=d(x)+2$ and $d(v)=d(y)+2$,
\item $f_{S(G,i)}\left(d(x),d(y)+1\right)=n^{i-2}\left(d(y)-\tau(x,y)\right)-\psi(i-2)d(x)$ edges $\{u,v\}$ where $d(u)=d(x)+2$ and $d(v)=d(y)+3$,
\item $f_{S(G,i)}\left(d(x)+1,d(y)\right)=n^{i-2}\left(d(x)-\tau(x,y)\right)-\psi(i-2)d(y)$ edges $\{u,v\}$ where $d(u)=d(x)+3$ and $d(v)=d(y)+2$,
\item $f_{S(G,i)}\left(d(x)+1,d(y)+1\right)=n^{i-2}\left(\tau(x,y)+1\right)+\psi(i-2)\left(d(x)+d(y)+1\right)$ edges $\{u,v\}$ where $d(u)=d(x)+3$ and $d(v)=d(y)+3$.
\end{description}
Hence, the contribution of these edges to the General Randi\'{c} index is equal to $\displaystyle\sum_{i=2}^{t-1}\sum_{\{x,y\}\in E}W'_{4\{x,y\}}$, where,
$$W'_{4\{x,y\}}=\sum_{i=0}^1\sum_{j=0}^1 (d(x)+i+2)^\alpha(d(y)+j+2)^\alpha f_{S(G,i)}(d(x)+i,d(y)+j).$$
Since $\displaystyle\sum_{i=2}^{t-1}n^{i-2}=\psi(t-2)$ and $\displaystyle\sum_{i=2}^{t-1}\psi(i-2)=\dfrac{t-2-\psi(t-2)}{1-n}$, we obtain $\displaystyle\beta_4=\sum_{i=2}^{t-1}\sum_{\{x,y\}\in E}W'_{4\{x,y\}}=\sum_{\{x,y\}\in E}W_{4\{x,y\}}$.

\item $u\in A_{i+1}$ and $v\in V_i$ for $1\leq i \leq t-1$. We assume that $v=wx$, where $w\in V^{i-1}$ and $x\in V$.
In this case $d(u)=n+1$ and, by Lemma \ref{SierpinskiVertexDegree}, there are $g_{_{S(G,i)}}(d(x)+1)=d(x)\psi(i-1)$ edges $\{u,v\}$ where $v$ has degree $d(v)=d(x)+3$ and there are $g_{_{S(G,i)}}(d(x))=n^{i-1}-d(x)\psi(i-1)$ edges $\{u,v\}$ where $d(v)=d(x)+2$. Hence, the contribution of these edges to the General Randi\'{c} index is equal to $\displaystyle\sum_{i=2}^{t-1}\sum_{x\in V}W'_{5\{x\}}$, where, 
$$W'_{5\{x\}}=(n+1)^\alpha\sum_{i=0}^1(d(x)+i+2)^\alpha g_{_{S(G,i)}}(d(x)+i).$$
Since $\displaystyle\sum_{i=1}^{t-1}n^{i-1}=\psi(t-1)$ and $\displaystyle\sum_{i=1}^{t-1}\psi(i-1)=\dfrac{t-1-n\psi(t-1)}{1-n}$, we obtain $\displaystyle\beta_5=\sum_{i=2}^{t-1}\sum_{x\in V}W'_{5\{x\}}$.  

\item $u\in A_t$ and $v\in V_t$. We assume that $v=wx$, where $w\in V^{t-1}$ and $x\in V$. As above $d(u)=n+1$ and, by Lemma \ref{SierpinskiVertexDegree}, there are $g_{_{S(G,t)}}(d(x)+1)=d(x)\psi(t-1)$ edges $\{u,v\}$ where $v$ has degree $d(v)=d(x)+2$ and there are $g_{_{S(G,t)}}(d(x))=n^{t-1}-d(x)\psi(t-1)$ edges $\{u,v\}$ where $d(v)=d(x)+1$. Thus, the contribution of these edges to the General Randi\'{c} index is equal to $\beta_6$.

\item $u,v\in V_t$. We assume that $u=wx$ and $v=w'y$, where $w,w'\in V^{t-1}$ and $x,y\in V$.
By Lemma \ref{SierpinskiGeneral}, there are
\begin{description}
\item $f_{S(G,t)}\left(d(x),d(y)\right)=n^{t-2}\left(n-d(x)-d(y)+\tau(x,y)\right)$ edges $\{u,v\}$ where $d(u)=d(x)+1$ and $d(v)=d(y)+1$,
\item $f_{S(G,t)}\left(d(x),d(y)+1\right)=n^{t-2}\left(d(y)-\tau(x,y)\right)-\psi(t-2)d(x)$ edges $\{u,v\}$ where $d(u)=d(x)+1$ and $d(v)=d(y)+2$,
\item $f_{S(G,t)}\left(d(x)+1,d(y)\right)=n^{t-2}\left(d(x)-\tau(x,y)\right)-\psi(t-2)d(y)$ edges $\{u,v\}$ where $d(u)=d(x)+2$ and $d(v)=d(y)+1$,
\item $f_{S(G,t)}\left(d(x)+1,d(y)+1\right)=n^{t-2}\left(\tau(x,y)+1\right)+\psi(t-2)\left(d(x)+d(y)+1\right)$ edges $\{u,v\}$ where $d(u)=d(x)+2$ and $d(v)=d(y)+2$.
\end{description}
Hence, the contribution of these edges to the General Randi\'{c} index is equal to $\displaystyle\beta_7=\sum_{\{x,y\}\in E}W_{7\{x,y\}}$.

\end{enumerate}
According to the seven cases above, the result follows. 
\end{proof}

Now, we will show some particular cases of Theorem \ref{MainTheorem-P(G,t)}.

\begin{corollary}\label{ThePolymerRegular}
For any $\delta$-regular graph $G$ of order $n\geq2$ and any integer $t\ge 2$,
$$R_\alpha(P(G,t))=\sum\limits_{i=1}^{7}\beta_{i},$$
where
\begin{flalign*}
\beta_{1}&=n^{\alpha+1}(\delta+2)^\alpha,\quad \beta_{2}=\dfrac{n\delta(\delta+2)^{2\alpha}}{2},&
\end{flalign*}
\begin{flalign*}
\beta_{3}&=n^2\psi(t-2)(n+1)^\alpha(\delta+2)^\alpha+&\\
&+\frac{n\delta(n+1)^\alpha(\delta+2)^\alpha\left(t-2-n\psi(t-2)\right)}{(n-1)}+\\
&+\dfrac{n\delta(n+1)^\alpha(\delta+3)^\alpha(t-2-n\psi(t-2))}{(1-n)},
\end{flalign*}
\begin{flalign*}
\beta_4&=(\delta+2)^{2\alpha}\psi(t-2)\left(\frac{n\delta(n-2\delta)}{2}+3\tau(G)\right)+&\\
&+(\delta+2)^\alpha(\delta+3)^\alpha\left(\left(n\delta^2-6\tau(G)\right)\psi(t-2)+\dfrac{n\delta^2(t-2-\psi(t-2))}{n-1}\right)+\\
&+(\delta+3)^{2\alpha}\left(\left(3\tau(G)+\frac{n\delta}{2}\right)\psi(t-2)+\dfrac{n\delta(2\delta+1)(t-2-\psi(t-2))}{2(1-n)}\right),
\end{flalign*}
\begin{flalign*}
\beta_{5}&=\frac{n(n+1)^\alpha(\delta+2)^\alpha\left(\delta(t-1)+\psi(t-1)(n-\delta-1)\right)}{(n-1)}+&\\
&+\frac{n\delta(n+1)^\alpha(\delta+3)^\alpha\left(t-1-\psi(t-1)\right)}{(1-n)},
\end{flalign*}
\begin{flalign*}
\beta_{6}&=(n+1)^\alpha(\delta+1)^\alpha\left(n^{t}-n\delta\psi(t-1)\right)+n\delta(n+1)^\alpha(\delta+2)^\alpha\psi(t-1),&
\end{flalign*} and
\begin{flalign*}
\beta_7&=(\delta+1)^{2\alpha}n^{t-2}\left(\frac{n\delta}{2}(n-2\delta)+3\tau(G)\right)+&\\
&+(\delta+1)^\alpha(\delta+2)^\alpha\left(\delta^2\left(n^{t-1}-n\psi(t-2)\right)-6n^{t-2}\tau(G)\right)+\\
&+(\delta+2)^{2\alpha}\left(\frac{n\delta}{2}\psi(t-1)+n\delta^{2}\psi(t-2)+3n^{t-2}\tau(G)\right).
\end{flalign*}
\end{corollary}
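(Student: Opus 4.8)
The plan is to specialize Theorem \ref{MainTheorem-P(G,t)} to the $\delta$-regular case, evaluating each of the seven sums $\beta_1,\dots,\beta_7$ under the substitution $d_G(x)=\delta$ for every $x\in V$. The only graph invariants that survive are $n$, $\delta$ and $\tau(G)$, since for a $\delta$-regular graph of order $n$ one has $\sum_{x\in V}1=n$, $\sum_{x\in V}d(x)=n\delta$, $|E|=\tfrac{n\delta}{2}$, and $\sum_{\{x,y\}\in E}\tau(x,y)=3\tau(G)$ (the last being the identity recorded just before Lemma \ref{SierpinskiGeneral}). Throughout I will repeatedly invoke the two elementary identities $(n-1)\psi(k)=n^{k}-1$ and $\psi(t-1)=\psi(t-2)+n^{t-2}=1+n\psi(t-2)$.

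For $\beta_1$ and $\beta_2$ the substitution is immediate: in $\beta_1$ every summand equals $n^\alpha(\delta+2)^\alpha$ and there are $n$ of them, while in $\beta_2$ every summand equals $(\delta+2)^{2\alpha}$ and there are $|E|=\tfrac{n\delta}{2}$ of them. For $\beta_3$, $\beta_5$ and $\beta_6$ I replace $\sum_{x\in V}(d(x)+2)^\alpha$, $\sum_{x\in V}d(x)(d(x)+2)^\alpha$, $\sum_{x\in V}d(x)(d(x)+3)^\alpha$ and $\sum_{x\in V}(d(x)+1)^\alpha\bigl(n^{t-1}-d(x)\psi(t-1)\bigr)$ by $n(\delta+2)^\alpha$, $n\delta(\delta+2)^\alpha$, $n\delta(\delta+3)^\alpha$ and $n(\delta+1)^\alpha\bigl(n^{t-1}-\delta\psi(t-1)\bigr)$ respectively. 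This yields $\beta_3$ and $\beta_6$ in the stated form directly; for $\beta_5$ one extra step is needed, namely merging the two $(\delta+2)^\alpha$-coefficients over the common denominator $n-1$ via $(n-1)\psi(t-1)+\delta(t-1)-\delta\psi(t-1)=\delta(t-1)+(n-\delta-1)\psi(t-1)$.

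The sums $\beta_4$ and $\beta_7$ need a little more care because $W_{4\{x,y\}}$ and $W_{7\{x,y\}}$ are not symmetric in $x$ and $y$: in each of them a term carrying $(d(x)+a)^\alpha(d(y)+b)^\alpha$ is accompanied by one carrying $(d(x)+b)^\alpha(d(y)+a)^\alpha$. When $d(x)=d(y)=\delta$ these collapse to $2(\delta+a)^\alpha(\delta+b)^\alpha$ times a common scalar, so after summing over $E$ I only need $\sum_{\{x,y\}\in E}\bigl(\delta-\tau(x,y)\bigr)=\tfrac{n\delta^2}{2}-3\tau(G)$ and $\sum_{\{x,y\}\in E}(2\delta+1)=\tfrac{n\delta}{2}(2\delta+1)$ for these ``off-diagonal'' pairs, together with $\sum_{\{x,y\}\in E}\bigl(n-2\delta+\tau(x,y)\bigr)=\tfrac{n\delta}{2}(n-2\delta)+3\tau(G)$ and $\sum_{\{x,y\}\in E}\bigl(\tau(x,y)+1\bigr)=3\tau(G)+\tfrac{n\delta}{2}$ for the ``diagonal'' ones. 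Substituting these, collecting the three surviving degree-pair factors for each of $\beta_4$ and $\beta_7$, and simplifying the scalar prefactors with the two $\psi$-identities above gives the claimed expressions; the corollary then follows by adding $\beta_1,\dots,\beta_7$.

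The only real obstacle is bookkeeping. One must be disciplined about which of the four degree pairs each term of $W_{4\{x,y\}}$ and $W_{7\{x,y\}}$ belongs to once $d(x)=d(y)=\delta$ forces several of them to coincide, and one must verify that the two superficially different scalar forms — one written in powers of $n$, the other in terms of $\psi$ — actually agree. For example, the coefficient of $(\delta+2)^{2\alpha}$ inside $\beta_7$ comes out as $3n^{t-2}\tau(G)+\tfrac{n^{t-1}\delta}{2}+\tfrac{n\delta}{2}\psi(t-2)+n\delta^2\psi(t-2)$, which equals $3n^{t-2}\tau(G)+\tfrac{n\delta}{2}\psi(t-1)+n\delta^2\psi(t-2)$ precisely because $\psi(t-1)=\psi(t-2)+n^{t-2}$. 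Every such reconciliation reduces to $(n-1)\psi(k)=n^{k}-1$ or $\psi(t-1)=\psi(t-2)+n^{t-2}$, so no new idea beyond Theorem \ref{MainTheorem-P(G,t)} is required.
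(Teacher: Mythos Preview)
Your proposal is correct and follows exactly the approach the paper intends: the corollary is presented in the paper without a separate proof, simply as a particular case of Theorem~\ref{MainTheorem-P(G,t)}, and your argument is precisely the specialization $d(x)=\delta$ together with the standard identities $|E|=n\delta/2$, $\sum_{\{x,y\}\in E}\tau(x,y)=3\tau(G)$ and the $\psi$-relations. The extra bookkeeping you describe (pairing the asymmetric terms in $W_{4\{x,y\}}$, $W_{7\{x,y\}}$ and reconciling $\psi(t-1)=\psi(t-2)+n^{t-2}$) is exactly what is needed and is carried out correctly.
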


As we mentioned above, a complete graph $K_n$ of order $n\ge 2$ is $(n-1)$-regular and it has $\displaystyle {n \choose 3}$ triangles. Therefore, the next result is deduced from Corollary \ref{ThePolymerRegular}.

\begin{corollary}
For any integers $n,t\ge 2$,
$$R_\alpha(P(K_n,t))=\displaystyle\sum_{l=1}^7\beta_l,$$ where 
\begin{flalign*}
\beta_1&=n^{\alpha+1}(n+1)^\alpha,\quad \beta_2=\frac{n(n-1)(n+1)^{2\alpha}}{2},&
\end{flalign*}
\begin{flalign*}
\beta_3&=n(t-2)(n+1)^{2\alpha}+n(n+1)^\alpha(n+2)^\alpha(2+n\psi(t-2)-t),&
\end{flalign*}
\begin{flalign*}
\beta_4&=(t-2)n(n-1)(n+1)^\alpha(n+2)^\alpha+\frac{(n+2)^{2\alpha}}{2}\left(n^3\psi(t-2)+(t-2)(n-2n^2)\right),&
\end{flalign*}
\begin{flalign*}
\beta_5&=(t-1)n(n+1)^{2\alpha}+n(n+1)^\alpha(n+2)^\alpha\left(\psi(t-1)-(t-1)\right),&
\end{flalign*}
\begin{flalign*}
\beta_6&=n^{\alpha+1}(n+1)^\alpha+(n^t-n)(n+1)^{2\alpha},&
\end{flalign*}
\begin{flalign*}
\beta_7&=(n-1)n^{\alpha+1}(n+1)^\alpha+\frac{(n^{t+1}-2n^2+n)(n+1)^{2\alpha}}{2}.&
\end{flalign*}
\end{corollary}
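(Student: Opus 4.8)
The plan is to specialize Corollary \ref{ThePolymerRegular} to the complete graph $K_n$. Since $K_n$ is $(n-1)$-regular of order $n$ with $\tau(K_n)=\binom{n}{3}=\frac{n(n-1)(n-2)}{6}$, it suffices to substitute $\delta=n-1$ and $\tau(G)=\binom{n}{3}$ into each of the seven summands $\beta_1,\dots,\beta_7$ of that corollary and simplify. Several structural features make this tractable: the three relevant exponent bases become $\delta+1=n$, $\delta+2=n+1$ and $\delta+3=n+2$, so every mixed product $(\delta+i)^\alpha(\delta+j)^\alpha$ collapses to a power of one of $n$, $n+1$, $n+2$; moreover $n-2\delta=2-n$, hence
\[
\frac{n\delta}{2}(n-2\delta)+3\tau(K_n)=\frac{n(n-1)(2-n)}{2}+\frac{n(n-1)(n-2)}{2}=0,
\]
which annihilates the leading term of $\beta_4$ (the $(\delta+2)^{2\alpha}\psi(t-2)$ summand) and the leading term of $\beta_7$ (the $(\delta+1)^{2\alpha}n^{t-2}$ summand); and $n-\delta-1=0$, which removes the $\psi(t-1)$ contribution inside the first summand of $\beta_5$.

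First I would dispatch the ``small'' terms. For $\beta_1$ and $\beta_2$ the substitution is immediate. For $\beta_3$, using $(n+1)^\alpha(\delta+2)^\alpha=(n+1)^{2\alpha}$ and $\frac{n\delta}{n-1}=n$, the two $n^2\psi(t-2)$ contributions cancel, leaving $n(t-2)(n+1)^{2\alpha}+n(n+1)^\alpha(n+2)^\alpha(2+n\psi(t-2)-t)$. For $\beta_5$ and $\beta_6$ the key identity is
\[
n(n-1)\psi(k)=n\bigl(n^{k}-1\bigr)=n^{k+1}-n,
\]
i.e. $(n-1)\psi(k)=n^k-1$ multiplied by $n$; in $\beta_6$ this turns $n\delta\psi(t-1)$ into $n^t-n$ and produces the stated $n^{\alpha+1}(n+1)^\alpha+(n^t-n)(n+1)^{2\alpha}$, while in $\beta_5$ the first summand collapses to $n(t-1)(n+1)^{2\alpha}$ once $n-\delta-1=0$ is used.

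The bulk of the work, and the main obstacle, is $\beta_4$ and $\beta_7$, where after killing the leading terms one must combine the $\psi(t-2)$- and $\psi(t-1)$-proportional pieces. The useful intermediate identities are $n\delta^2-6\tau(K_n)=n(n-1)$, $\frac{n\delta^2}{n-1}=n(n-1)$, $3\tau(K_n)+\frac{n\delta}{2}=\frac{n(n-1)^2}{2}$, $(n-1)^2+(2\delta+1)=n^2$, and $\delta^2\bigl(n^{t-1}-n\psi(t-2)\bigr)-6n^{t-2}\tau(K_n)=n(n-1)$. Using these, the $\psi(t-2)$ terms in $\beta_4$ either cancel or regroup into $\frac{(n+2)^{2\alpha}}{2}\bigl(n^3\psi(t-2)+(t-2)(n-2n^2)\bigr)$ together with $(t-2)n(n-1)(n+1)^\alpha(n+2)^\alpha$; for $\beta_7$ the last coefficient expands, via $n(n-1)\psi(t-1)=n^t-n$ and $n(n-1)\psi(t-2)=n^{t-1}-n$, to $\tfrac12\bigl(n^{t+1}-2n^2+n\bigr)$. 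The manipulations are elementary polynomial identities in $n$ and $n^{t}$, but the cancellations are delicate, so I would carry them out term by term. Assembling the seven simplified summands yields the asserted formula for $R_\alpha(P(K_n,t))$.
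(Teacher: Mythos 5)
Your proposal is correct and follows exactly the paper's route: the paper derives this corollary by substituting $\delta=n-1$ and $\tau(K_n)=\binom{n}{3}$ into Corollary \ref{ThePolymerRegular} and simplifying, which is precisely what you do (your intermediate identities, e.g. the vanishing of $\frac{n\delta}{2}(n-2\delta)+3\tau(K_n)$ and the reductions via $n(n-1)\psi(k)=n^{k+1}-n$, all check out). You simply carry out in detail the algebra the paper leaves implicit.
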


\end{document}